\definecolor{dark-blue}{rgb}{0,0,0.6}
\definecolor{Purple}{rgb}{0.2,0,0.25}
\newtheorem{theorem}{Theorem}[section]
\newtheorem{lemma}[theorem]{Lemma}
\theoremstyle{definition}
\newtheorem{example}[theorem]{Example}%
\newtheorem{remark}[theorem]{Remark}%
\newtheorem{alg}[theorem]{Algorithm}
\numberwithin{equation}{section}
\date{June 12, 2022}
\subjclass[2020]{47H10, 90C31, 49K40, 90C30, 90C59}
\keywords{block iterative projections (BIP), common fixed point problem, cutter, perturbation, weight function.}
\begin{document}

\title[A Generalized Block-Iterative Projection method 
for Cutters]{A Generalized Block-Iterative Projection Method for the Common Fixed Point Problem Induced by Cutters}

\author{Yair Censor}
\address{ Department of Mathematics, University of Haifa, Mt. Carmel, 3498838 Haifa, Israel.} 
\email{(Yair Censor) yair@math.haifa.ac.il}
\author{Daniel Reem}
\address{The Center for Mathematics and Scientific Computation (CMSC), University of Haifa, Mt. Carmel, Haifa, 3498838,  Israel.} 
\email{(Daniel Reem) dream@math.haifa.ac.il}
\author{Maroun Zaknoon}
\address{Department of Mathematics, The Arab Academic College for Education, 22 HaHashmal Street, Haifa, 32623, Israel}
\email{(Maroun Zaknoon) zaknoon@arabcol.ac.il}

\maketitle

\begin{abstract}
The block-iterative projections (BIP) method of Aharoni and Censor [Block-iterative projection methods for parallel computation of solutions to convex feasibility problems, Linear Algebra and its Applications 120, (1989), 165–175] is an iterative process for finding asymptotically a point in the nonempty intersection of a family of closed convex subsets. It employs orthogonal projections onto the individual subsets in an algorithmic regime that uses “blocks” of operators and has great flexibility in constructing specific algorithms from it. We extend this algorithmic scheme to handle a family of continuous cutter operators and to find a common fixed point of them. Since the family of continuous cutters includes several important specific operators, our generalized scheme, which ensures global convergence and retains the flexibility of BIP, can handle, in particular, metric (orthogonal) projectors and continuous subgradient projections, which are very important in applications. We also allow a certain kind of adaptive perturbations to be included, and along the way we derive a perturbed Fej\'er monotonicity lemma which is of independent interest.
\end{abstract}

\section{Introduction}\label{sec:Intro}
\subsection{Background and contributions}\label{subsec:Background}

Given a finite family of $m\in\mathbb{N}$ nonempty closed convex
subsets $C_{1},C_{2},\ldots,C_{m}$ of the $n$-dimensional Euclidean
space $\mathbb{R}^{n}$, the \textit{convex feasibility problem} (CFP)
is to find a point in their intersection $C:=\cap_{j=1}^{m}C_{j}$,
assuming that the intersection is nonempty. This well-known problem
has applications in many theoretical and real-world scenarios, such
as image reconstruction from projections, data compression, radiation
therapy treatment planning, signal processing, sensor network source
localization, the solution of systems of linear or nonlinear inequalities 
induced by convex functions (since the solution of a system of inequalities
is a point in the intersection of the level-sets of the functions
which induce these inequalities), as well as in many other areas, as can be    
seen in, e.g., Cegielski's book \cite[p. 23]{Cegielski2012book}; see also  \cite{GibaliKuferReemSuss2018jour} and some of the references therein for the application of the CFP for solving optimization problems. Additional details about the CFP, including various algorithmic schemes for solving it and related references can be found in \cite{AharoniCensor1989jour,BauschkeBorwein1996jour,Bregman1967jour,Cegielski2012book,CensorReem2015jour,CensorZenios1997book,Combettes1996jour(CFP)}.

One of the methods for solving the CFP, which is of special importance
to our paper, is the BIP (Block-Iterative Projections) method of Aharoni
and Censor \cite{AharoniCensor1989jour}. In a nutshell, each iteration
of BIP is a relaxed convex combination of the orthogonal projections
onto the given subsets $C_{1},C_{2},\ldots,C_{m}$, where the combination's
weights themselves are dynamic, namely they may depend on the iteration
index and can vary from iteration to iteration. This method is rather
flexible since particular instances of it are fully sequential iterations
with repetitive controls, fully simultaneous iterations, and block
iterative iterations.

A more general problem is the \textit{common fixed point problem}
(CFPP) of finding a point in the intersection of the fixed point sets
of a finite family of operators $T_{i}:\mathbb{R}^{n}\to\mathbb{R}^{n}$,
$i\in\{1,2,\ldots,m\}$. This problem reduces to the CFP when for
each $i\in\{1,2,\ldots,m\}$, the operator $T_{i}$ is the orthogonal
projection onto $C_{i}$. Many methods have been devised to solve
the CFPP, under various settings: see, for example, \cite{Cegielski2012book,CensorSegal2009jour(CFPP),ReichZalas2016jour, Zaslavski2016book} and some of the references therein, as well as Algorithm \textbf{\ref{alg:GBIP}} and Examples \textbf{\ref{ex:FullySequential}}--\textbf{\ref{ex:BlockIterative}} below.

In this paper we consider the CFPP in the case where all operators
$T_{i}$ are continuous cutters. The class of continuous cutters is
quite wide and includes, among others, subgradient projections of 
differentiable convex functions having nonempty zero-level-sets,
resolvents of maximally monotone operators, and orthogonal projections
onto nonempty, closed and convex subsets of the space.

Cutters were introduced by Bauschke and Combettes in \cite{BauschkeCombettes2001jour} and by Combettes in  \cite{Combettes2001inproc} (not yet under the name “cutters” though). More details regarding this class of operators, as well as a short history and other names of it, can be found in Section \textbf{\ref{sec:NotationDefinitions}}
below. Our goal is to solve the CFPP asymptotically, namely to construct
an iterative sequence which converges to
a point in the common fixed point set of the given family of cutters.

We solve the CFPP using Algorithm \textbf{\ref{alg:GBIP}} which is introduced  below. This algorithm  is a generalization of the BIP method, mentioned above, where the generalization is expressed in the use of continuous cutters instead of orthogonal projections and in the permission of certain adaptive perturbations to appear in the iterative scheme (see Section \textbf{\ref{sec:VariantsBIP}} for other variants of the BIP method). As can be seen in both the formulation of Algorithm \textbf{\ref{alg:GBIP}} and in Examples \textbf{\ref{ex:FullySequential}}--\textbf{\ref{ex:BlockIterativeGeneralized}} below, Algorithm \textbf{\ref{alg:GBIP}} retains the flexibility of BIP not only because the users have freedom regarding the relaxation parameters and the weight functions, but also because Algorithm \textbf{\ref{alg:GBIP}} allows, as particular cases, fully sequential iterations with a repetitive control, fully simultaneous iterations, and block iterative iterations. Consequently, Algorithm \textbf{\ref{alg:GBIP}} can be adapted naturally to both serial and parallel computational architectures.

We show in Theorem \textbf{\ref{thm:GBIP}} below that the iterative sequence generated by Algorithm \textbf{\ref{alg:GBIP}} always converges globally to a generalized solution of the CFPP, namely to a point in a set which contains the common fixed point set of the given family of cutters. As we explain after this theorem (in Remark \textbf{\ref{rem:Q=00003DQhat}}), under mild conditions both sets coincide, and so, in a wide class of scenarios, the iterative sequence generated by our algorithmic scheme converges globally to a common fixed point of the given family of cutters. Along the way we obtain a result of independent interest, namely the apparently new Lemma \textbf{\ref{lem:PerturbedFejer}} below, related to Fej\'er-monotonicity in a perturbed form; this lemma shows that cutters, as well as relaxed versions of them, are not only quasi-nonexpansive, but rather their quasi-nonexpansivity is preserved under small perturbations
of a certain type.

As mentioned above, Algorithm \textbf{\ref{alg:GBIP}} allows perturbations
of a certain kind, but still converges (globally). In other words, our
algorithmic scheme exhibits a certain kind of resiliency, namely, it is ``perturbation resilient''. The perturbations in Algorithm
\textbf{\ref{alg:GBIP}} may appear as a result of noise, computational
errors, and so on. These perturbations may also be generated actively by the user as part of the application of the ``superiorization methodology'' (SM). In the SM, in contrast to the case in which the perturbations are unknown to the user (frequently only their magnitude can be estimated), the goal is to harness the permissible perturbations in order to obtain solutions, or generalized solutions, which are superior with respect to some given cost function, over (generalized) solutions which would be obtained without the generated perturbations. More details regarding the superiorization methodology, in its classical form, can be found in the initial papers  \cite{CensorDavidiHerman2010jour,DavidiHermanCensor2009jour}
and the survey papers \cite{Censor2015surv,Herman2014surv}; a re-examination
of this methodology, as well as a significant extension of its scope
and an extensive list of related references, can be found in \cite[Section 4]{ReemDe-Pierro2017jour}; a continuously updated bibliographical list of references related to the superiorization methodology can be found on-line in \cite{CensorSuperiorizationPage}.

A final word about potential computational advantages. This paper is a theoretical work. Comparative computational performance of BIP-for-cutters algorithms proposed and studied here can really be made only with exhaustive testing of the many possible specific variants permitted by the general schemes and their various user-chosen parameters. The computational advantages of the BIP algorithmic structures have been shown in the past for algorithms that use orthogonal projections rather than other cutter operators in many publications. For example, the work on proton computed tomography (pCT) in \cite{Karonis-2013} employs very efficiently a parallel code that uses a version of a block-iterative algorithm called “diagonally-relaxed orthogonal projections” (DROP), presented in \cite{DROP-2008}. See, e.g., also the recent paper on stochastic block projection algorithms by Necoara \cite{Necoara2022online}. It is plausible to hypothesize that since the BIP algorithmic structure and the cutter operators \cite{BauschkeCombettes2001jour}, see also \cite{Bauschke-cutters}, have been demonstrated to be computationally useful separately, then so might very well be their combination in the BIP-for-cutters scheme presented here. Admittedly, such practical questions should be resolved in future experimental works, preferably within the context of a significant real-world application.

\subsection{Paper layout\label{subsec:layout}}

Section \textbf{\ref{sec:NotationDefinitions}} presents the
notation and basic definitions used throughout the paper. Section
\textbf{\ref{sec:GBIP}} presents the generalized BIP method, namely
Algorithm \textbf{\ref{alg:GBIP}}, and further elaborates on it. A
few variants of the BIP method are discussed in Section \textbf{\ref{sec:VariantsBIP}},
where we compare them and their associated convergence results to
Algorithm \textbf{\ref{alg:GBIP}} and Theorem \textbf{\ref{thm:GBIP}}.
The convergence theorem (Theorem \textbf{\ref{thm:GBIP}}) and its
proof appear in Section \textbf{\ref{sec:Convergence}}.

\section{Notation and basic definitions\label{sec:NotationDefinitions}}

Given $n\in\mathbb{N}$, where $\mathbb{N}$ is the set of positive integers, let $X:=\mathbb{R}^{n}$ endowed with the Euclidean inner product $\langle\cdot,\cdot\rangle$ and the corresponding 
Euclidean norm $\|\cdot\|$. We denote by $d(x,C)$ the distance
between $x\in X$ and a nonempty set $C\subseteq X$, namely, $d(x,C):=\inf\{\| x-c\|\mid c\in C\}$,
and denote by $B[x,r]$ the closed ball with center $x$ and radius
$r\in[0,\infty]$ (of course, $B[x,r]=X$ if $r=\infty$ and $B[x,r]=\{x\}$
if $r=0$). The identity operator is denoted by $Id$, namely, $Id(x)=x$
for all $x\in X$. We use the convention that the sum over the empty
set is zero. Finally, for each operator $U:X\to X$, the set $\textnormal{Fix}(U):=\{x\in X | U(x)=x\}$ stands for the set of all fixed points of $U$. 

Given an operator $T:X\rightarrow X$ and a nonempty set $S\subseteq X$, we say that $T$ is a separator of $S$ provided 
\begin{equation}
\langle x-T(x),q-T(x)\rangle\leq0,\:\forall q\in S,x\in X.\label{eq:PropertyT}
\end{equation}
In particular, if $\textnormal{Fix}(T)$ is nonempty and $T$ is a separator of $S:=\textnormal{Fix}(T)$, then we say that $T$ is a \textit{cutter}.
Given $m\in\mathbb{N}$, denote $I:=\{1,2,\ldots,m\}$. A weight function
with respect to $I$ is a function $w:I\rightarrow[0,1]$ which satisfies
$\sum_{i\in I}w(i)=1$. Given a weight function $w:I\rightarrow[0,1]$
and a family $(T_{i})_{i\in I}$ of cutters, let $T_{w}$ be the operator
$T_{w}:X\rightarrow X$ defined by $T_{w}(x):=\sum_{i\in I}w(i)T_{i}(x)$,
for each $x\in X$.

The class of cutter operators was introduced in \cite{BauschkeCombettes2001jour} 
and \cite{Combettes2001inproc} under the name ``the class $\mathcal{T}$''.
Other names appear in the literature, for instance ``directed operators''
\cite{CensorSegal2009jour(split),Zaknoon2003PhD}. The name ``cutter''
was first suggested in \cite{CegielskiCensor2011inproc}. The reason
behind this name is that for each point $x$ in the space, which is
not a fixed point of $T$, the operator $T$ induces a hyperplane
(the one which is orthogonal to the vector $T(x)-x$ and passes through
$T(x)$) that ``cuts'' the space into two half-spaces: one of which
contains the fixed point set of $T$ and the other contains $x.$
Various examples of cutters can be found in \cite{BauschkeCombettes2001jour,Combettes2001inproc}
and (explicitly or implicitly) in \cite{BauschkeCombettes2017book} 
and \cite{Cegielski2012book}. A relatively recent work on cutters is \cite{Bauschke-cutters}.

In particular, the following operators are continuous cutters:
\begin{enumerate}
\item The subgradient projection of a (Fr\'echet) differentiable convex function $f:X\rightarrow\mathbb{R}$ whose zero-level-set $\{x\in X\mid f(x)\leq0\}$
is nonempty. Here 
\begin{equation}
T(x):=\left\{ \begin{array}{lll}
{\displaystyle {x-\frac{f(x)}{\|\nabla f(x)\|^2}\nabla f(x)},} & \text{if }\,f(x)>0,\\
x, & \text{if }\,f(x)\leq 0,
\end{array}\right.
\end{equation}
where $\nabla f(x)$ is, for each $x\in X$, the  gradient of $f$ at $x$. The subgradient inequality
and the assumption that $\{x\in X\mid f(x)\leq0\}\neq\emptyset$ imply
that $T$ is well-defined, namely, that $\nabla f(x)\neq 0$ if $f(x)>0$. Since $f$ is convex and differentiable, it is actually continuously differentiable \cite[Remark 6.2.6, p. 202]{Hiriart-UrrutyLemarechal2001book}. Hence from \cite[Proposition 29.41(ix), p. 553]{BauschkeCombettes2017book} it follows that $T$ is continuous, from \cite[Corollary 4.2.6, p. 146]{Cegielski2012book} 
it follows that $T$ is a cutter, and from \cite[Corollary 4.2.5, p. 145]{Cegielski2012book} it follows that the fixed point set of $T$ is $\{x\in\mathbb{R}^{n}\mid f(x)\leq0\}$. See also \cite{Bauschke-subgrad-2015};

\item Any firmly nonexpansive (FNE) operator, namely, $\|Tx-Ty\|^{2}\leq\langle Tx-Ty,x-y\rangle$
for all $x,y\in X$. Indeed, it is well-known that every firmly nonexpansive
is nonexpansive (see, for example, \cite[Theorem 2.2.10(v) or Theorem 2.2.10(vi), p. 70]{Cegielski2012book}).
Hence $T$ is continuous, and from \cite[Theorem 2.2.5]{Cegielski2012book} 
it follows that $T$ is a cutter;

\item An orthogonal projection on a nonempty, closed and convex subset $C$
of the space. Indeed, it is well-known that any orthogonal projection
is firmly nonexpansive (see, e.g., \cite[Proposition 4.16, p. 70]{BauschkeCombettes2017book}),
and so, as mentioned above, it is a continuous cutter. The fixed point
set of $T$ is $C$, as one can verify immediately. Several explicit
expressions for $T$, in some particular cases where $C$ has a simple
form, appear in \cite[Section 4.1]{Cegielski2012book};
\item The resolvent of a maximally monotone operator, namely, $T:=(Id+\gamma A)^{-1}$,
where $Id:X\rightarrow X$ is the identity operator, $\gamma>0$ and
$A:X\rightarrow2^{X}$ is a set-valued operator which is maximally
monotone. The assertion follows from \cite[Proposition 23.8(iii), p. 395]{BauschkeCombettes2017book},
\cite[Proposition 4.4(i),(v), p. 70]{BauschkeCombettes2017book} and
an elementary calculation. It is worth noting that the fixed point
set of $T$ is the zero set of $A$, namely, the set $\{x\in X\mid0\in Ax\}.$
This claim follows from an elementary calculation (see also \cite[Proposition 23.38, p. 405]{BauschkeCombettes2017book}).
\end{enumerate}

\section{The generalized BIP method}\label{sec:GBIP}

Under the assumptions and notations of Section \textbf{\ref{sec:NotationDefinitions}},
the generalized BIP algorithm is defined as follows: 

\begin{alg}\textbf{The generalized BIP method for cutters} \label{alg:GBIP}\\\\
\textbf{Input:} A positive integer $n$, an arbitrary initialization point
$x^{0}\in X:=\mathbb{R}^{n}$, two positive numbers $\tau_{1}$ and
$\tau_{2}$ which satisfy $\tau_{1}+\tau_{2}\leq2$, a positive integer $m$,
an index set $I:=\{1,2,\ldots,m\}$, a family of cutters $(T_{i})_{i\in I}$
defined on $X,$ with fixed point sets $Q_{i}:=Fix(T_{i})=\{x\in X\mid T_{i}(x)=x\}$
and a nonempty common fixed point set $Q:=\cap_{i\in I}Q_{i}$, a
(generalized) real number $\sigma\in(0,\infty]$ with the property
that $\sigma>d(x^{0},Q)$, a sequence of relaxation parameters $(\lambda_{k})_{k=0}^{\infty}$
which are positive numbers in the interval $[\tau_{1},2-\tau_{2}]$,
a sequence $(w_{k})_{k=0}^{\infty}$ of weight functions with respect
to $I$.\\\\
\textbf{Iterative step:} Given $k\in\mathbb{N}\cup\{0\}$ and the current
iterate $x^{k}$, calculate the next iterate $x^{k+1}$ by the iterative
process
\begin{equation}\label{eq:GBIP}
x^{k+1}:=x^{k}+\lambda_{k}(T_{w_{k}}(x^{k})-x^{k})+e^{k},
\end{equation}
where the error term $e^{k}\in X$ has the form 
\begin{equation}
e^{k}:=\sum_{i\in I}w_{k}(i)e^{k,i},
\end{equation}
and, for all $i\in I,$ the perturbation $e^{k,i}$ is any vector
in $X$ which satisfies 
\begin{equation}\label{eq:abs(eki)}
\|e^{k,i}\|\leq\frac{1}{2}\cdot\frac{\lambda_{k}(2-\lambda_{k})\|T_{i}(x^{k})-x^{k}\|^{2}}{\sqrt{\zeta_{k,i}}+\lambda_{k}\|T_{i}(x^{k})-x^{k}\|+2\sigma},
\end{equation}

where 
\begin{equation}
\zeta_{k,i}:=(\lambda_{k}\|T_{i}(x^{k})-x^{k}\|+2\sigma)^{2}+\lambda_{k}(2-\lambda_{k})\|T_{i}(x^{k})-x^{k}\|^{2}.
\end{equation}

\end{alg}

\begin{example} \label{ex:FullySequential} Algorithm \textbf{\ref{alg:GBIP}}
becomes fully sequential if for every $k\in\mathbb{N}$, one has $w_{k}(i)=0$
for all $i\in I$ with the exception of one index $i_{0}(k)$ for
which $w_{k}(i_{0}(k))=1$. In this case the index $i_{0}$ can be
regarded as a control function that maps $\mathbb{N}\cup\{0\}$ to
$I$ by assigning to the given index $k\in\mathbb{N}\cup\{0\}$ the
unique index $i_{0}(k)\in I$. If $i_{0}$ has the property that for
all $j\in I$ there are infinitely many $k\in\mathbb{N}$ such that
$i_{0}(k)=j$, namely $w_{k}(j)=1$, then $i_{0}$ is the so-called
\textit{repetitive control}. Well-known particular cases of repetitive
controls are cyclic and almost cyclic controls which are, in turn,
also special cases of the class of \textit{expanding controls} presented
in \cite{expanding-2011}. 

In particular, Algorithm \textbf{\ref{alg:GBIP}} generalizes the
well-known method of successive orthogonal projections. Moreover,
in the case of repetitive controls any $j\in I$ satisfies $\sum_{k=1}^{\infty}w_{k}(j)=\infty$,
and, hence, Theorem \textbf{\ref{thm:GBIP}} below ensures that the
iterative sequence $(x^{k})_{k\in\mathbb{N}}$ converges to a point
in the intersection of the fixed point sets of the given family of
operators $(T_{i})_{i\in I}$. \end{example}

\begin{example} \label{ex:FullySimultaneous} Algorithm \textbf{\ref{alg:GBIP}}
becomes fully simultaneous when $w_{k}(i)>0$ for all $i\in I$ and
$k\in\mathbb{N}\cup\{0\}$, since in this case at each iteration all
the cutters $(T_{i})_{i\in I}$ are considered. If, in addition, all
the cutters are orthogonal projections onto given hyperplanes, then
Algorithm \textbf{\ref{alg:GBIP}} becomes a Cimmino-type algorithm
for solving the linear system induced by these hyperplanes. \end{example}

\begin{example} \label{ex:BlockIterative} Algorithm \textbf{\ref{alg:GBIP}} becomes block-iterative in the classical sense if the following scenario
occurs: first, one partitions the given index set $I:=\{1,2,\ldots,m\}$
into $\widetilde{m}\leq m$ ``blocks'', namely, into $\widetilde{m}\in\mathbb{N}$
disjoint and nonempty index subsets $I_{1},I_{2},\ldots,I_{\widetilde{m}}$
whose union is $I$; then one defines a control function over the
block indices, namely a function $\widetilde{c}:\mathbb{N}\cup\{0\}\to\{1,2,\ldots,\widetilde{m}\}$;
then, for each $k\in\mathbb{N}\cup\{0\}$, one defines a weight function
$w_{k}:I\to[0,1]$ by $w_{k}(i):=0$ if $i\notin I_{\widetilde{c}(k)}$,
and $w_{k}(i)$ an arbitrary number in $[0,1]$ if $i\in I_{\widetilde{c}(k)}$,
with the additional condition that $\sum_{i\in I_{\widetilde{c}(k)}}w_{k}(i)=1$.

For instance, suppose that $\widetilde{c}(k)=(k\mod\widetilde{m})+1$
for all $k\in \mathbb{N}\cup\{0\}$; suppose further that for each $\widetilde{j}\in\{1,2,\ldots,\widetilde{m}\}$
there are $\alpha_{\widetilde{j}}$ elements in block number $\widetilde{j}$;
if one defines $w_{k}(i):=0$ when $i\notin I_{\widetilde{c}(k)}$
and $w_{k}(i):=1/\alpha_{\widetilde{c}(k)}$ when $i\in I_{\widetilde{c}(k)}$,
then this is a control which cycles periodically between the blocks;
now, in order to construct $x^{k+1}$, one first observes that $\widetilde{c}(k)=\widetilde{j}$
for some $\widetilde{j}\in\{1,2,\ldots,\widetilde{m}\}$, then one
considers all the cutters in block number $\widetilde{j}$ and gives
them an equal weight $1/\alpha_{\widetilde{j}}$, then one constructs
the weighted sum $T_{w_{k}}$ of the cutters in that block, and from
$T_{w_{k}}$ and \textbf{(\ref{eq:GBIP})} one obtains $x^{k+1}$.

\end{example}

\begin{example}\label{ex:BlockIterativeGeneralized}

Algorithm \ref{alg:GBIP} becomes block-iterative in the generalized
sense if the following scenario occurs: one defines a block selection
function $J:\mathbb{N}\cup\{0\}\to2^{I}\backslash\left\{ \emptyset\right\} $
which, at iteration number $k$, selects a block $J_{k},$ namely
a nonempty subset of $I$; then, for each $k\in\mathbb{N}\cup\{0\}$,
one defines a weight function $w_{k}:I\to[0,1]$ by $w_{k}(i):=0$
if $i\notin J_{k}$, and $w_{k}(i)$ is an arbitrary number in $[0,1]$
if $i\in J_{k}$, with the additional condition that $\sum_{i\in J_{k}}w_{k}(i)=1$.
Under these assumptions, \textbf{(\ref{eq:GBIP})} becomes

\begin{equation}
x^{k+1}:=x^{k}+\lambda_{k}\sum_{i\in J_{k}}w_{k}(T_{i}(x^{k})-x^{k})+\sum_{i\in J_{k}}w_{k}e^{k,i}.
\end{equation}

\end{example}

\begin{remark} The condition described in \textbf{(\ref{eq:abs(eki)})}
is an adaptive one. It seems to be new, although it is inspired from
other forms of adaptive error terms which appear in \cite[Section 5]{CensorReem2015jour},
\cite[Subsection 2.3]{ReemDe-Pierro2017jour}. It is unclear whether
the sequence $(e^{k})_{k=0}^{\infty}$ is summable, and hence convergence results
which discuss \textbf{(\ref{eq:GBIP})} with summable errors cannot be used.
\end{remark}

\begin{remark} \label{rem:sigma} The (generalized) real number $\sigma$
given in the input of Algorithm \textbf{\ref{alg:GBIP}} poses a certain
limitation on the error terms $e^{k,i}$ and $e^{k}$, for all $i\in I$
and $k\in\mathbb{N}\cup\{0\}$. Indeed, one has to be able to derive
an estimate on how far is the solution set $Q$ located from the initial
iteration vector $x^{0}$ in order to have in hand an explicit $\sigma$.
Such an explicit estimate can be derived sometimes.

For example, if one is able to show that $Q$ is bounded, i.e., that
it is strictly contained inside some ball $B[c^{0},r]$, then the
triangle inequality implies that any $\sigma\in(r+\|x^{0}-c^{0}\|,\infty)$
satisfies $d(x^{0},Q)<\sigma$. Such a case obviously occurs when,
for instance, $Q_{i}$ is bounded for some $i\in I$, or $Q_{i}\cap Q_{j}$
is bounded for some $i,j\in I$. Real-world scenarios in which $Q$
is bounded occur, for example, in sensor network source localization
problems in acoustics \cite{HeroBlatt2005inproc} and in wireless (electromagnetic) communication \cite{GholamiWymeerschStromRydstrom2011jour}, since
in both cases actually all the sets $Q_{i}$ are bounded (they are
discs).

As another example, consider the case of a consistent linear equation
$Ax=y$, where $s\in\mathbb{N}$, $A\in\mathbb{R}^{s\times n}$ and
$y\in\mathbb{R}^{s}$ are given and the desired solution $x\in\mathbb{R}^{n}$
should satisfy the additional constraint $\|x\|_{1}\leq\varepsilon$
for some given $\varepsilon>0$, where $\|x\|_{1}:=\sum_{i=1}^{n}\arrowvert  x_i\arrowvert$
is  the $\ell_{1}$-norm of $x=(x_{i})_{i=1}^{n}$. Such a problem
has applications in signal processing \cite{CarmiCensorGurfil2012jour}.
Since $\|x\|\leq\|x\|_{1}$ always holds (where $\|\cdot\|$ is the
Euclidean norm), one has $\|x\|\leq\varepsilon$; hence, from the
triangle inequality, $d(x^{0},Q)\leq\|x^{0}-x\|\leq\|x^{0}\|+\|x\|\leq\|x^{0}\|+\varepsilon$;
thus, any $\sigma>\|x^{0}\|+\varepsilon$ is good for the purpose
of Algorithm \textbf{\ref{alg:GBIP}}.

Anyway, if one is unable to estimate $d(x^{0},Q)$ from above, then
one may be forced to assume that $\sigma=\infty$, which means that
all the error terms vanish. 
\end{remark}

\begin{remark}
Algorithm \textbf{\ref{alg:GBIP}}, as described above, continues forever. Of course, in practice one needs some terminating condition in order to obtain an output. One such a criterion can be to stop the iterative process at some large iteration, say $k=10^6$, and to take the corresponding point $x^k$ as the output. Another criterion can be to check, in each iteration, the distance $d(x^k,Q_i)$ for each $i\in I$, assuming these distances can be evaluated, and to stop the process when $\max\{d(x^k,Q_i) | i\in I\}\leq \widehat{\epsilon}$ for some predetermined $\widehat{\epsilon}\geq 0$ (the case $\widehat{\epsilon}:=0$ is of interest only when one can prove convergence to $\cap_{i\in I}Q_i$ after finitely many iterations). A third criterion, at least in the case where all the sets $Q_i$ are zero-level-sets of some functions $f_i$, is to evaluate, in each iteration, $f_i(x^k)$ for all $i\in I$, and to stop the process when $\max\{f_i(x^k): i\in I\}\leq \bar{\epsilon}$ for some predetermined $\bar{\epsilon}\geq 0$. Other terminating conditions can be given. 
\end{remark}

\section{Variants of BIP\label{sec:VariantsBIP}}

Over the years other variations of the BIP method have
appeared. We discuss the ones which we are aware of in this section, where we also make a few comparisons between them and our method and convergence result.
We focus on variants in which the considered operators are cutters in general and not just particular cases of them such as orthogonal projections or firmly nonexpansive operators. For the sake of completeness, we also mention briefly,
in the last item of the list, variants of BIP of this latter type,
as well as corresponding convergence results.

In what follows $\widehat{I}:=\{i\in I\mid\sum_{k=0}^{\infty} w_{k}(i)=\infty\}$
and $\widehat{Q}:=\cap_{i\in\widehat{I}}\,Q_{i}$, with the convention
that $\widehat{Q}:=X$ if $\widehat{I}=\emptyset$. Here is our list.

\textbf{(1)} The original BIP method appears in \cite[Algorithm 1]{AharoniCensor1989jour}. There the space is finite-dimensional, the finitely many cutters are orthogonal projections onto given nonempty, closed and convex subsets and no perturbations are allowed. Our proof is inspired by \cite[Theorem 1]{AharoniCensor1989jour}, but because of the different settings, there are several significant differences between our proof and the proof which appears in \cite{AharoniCensor1989jour}; for instance, we  need Lemma \textbf{\ref{lem:PerturbedFejer}} and also need to perform a careful analysis in Lemma \textbf{\ref{lem:beta}} as a result of the appearance of perturbations. 

\textbf{(2)} In \cite[Chapter 2]{Zaknoon2003PhD}, and in the unpublished
technical report \cite{CensorZaknoon2003Draft} (albeit some modifications
are needed there), appears a version of \cite[Algorithm 1]{AharoniCensor1989jour} 
in which the index set $I$ is finite, the space $X$ is a finite-dimensional
Euclidean space, the operators $(T_{i})_{i\in I}$ are continuous
cutters, perturbations are not allowed (namely, they vanish), and
the relaxation parameters $\lambda_{k}$ satisfy the condition $\lambda_{k}\in[\tau_{1},(2-\tau_{2})L(x^{k},w_{k})]$
for all $k\in\mathbb{N}\cup\{0\}$, where, for all $k\in\mathbb{N}\cup\{0\},$
\begin{equation}
L(x^{k},w_{k}):=\left\{ \begin{array}{lll}
{\displaystyle 1,} & \text{if }\,x^{k}=T_{w_{k}}(x^{k}),\\
\displaystyle{\sum_{i\in I}\frac{w_{k}(i)\|T_{i}(x^{k})-x^{k}\|^{2}}{\|T_{w_{k}}(x^{k})-x^{k}\|^{2}}}, & \text{otherwise. }
\end{array}\right.
\end{equation}
 From the convexity of the square norm, it follows that $L(x^{k},w_{k})\geq1$
for all $k\in\mathbb{N}\cup\{0\}$. It is also assumed that $Q\neq\emptyset$
and $\widehat{I}=I$ (hence $Q=\widehat{Q}$).

The first convergence theorem is \cite[Theorem 2.4.11]{Zaknoon2003PhD}  
(essentially \cite[Theorem 20]{CensorZaknoon2003Draft}), which says
that if the interior of $Q$ is nonempty, then the algorithmic sequence
converges to a point in $Q$. The second convergence theorem is \cite[Theorem 2.4.12]{Zaknoon2003PhD} 
(essentially \cite[Theorem 21]{CensorZaknoon2003Draft}), which says
that if merely $\lambda_{k}\in[\tau_{1},2-\tau_{2}]$ for all $k\in\mathbb{N}\cup\{0\}$,
then the algorithmic sequence converges to a point in $Q$. The third
convergence theorem is \cite[Theorem 2.5.3]{Zaknoon2003PhD} (essentially 
\cite[Theorem 24]{CensorZaknoon2003Draft}), which says that if $Q_{i}$
is strictly convex for all $i\in I$, then the algorithmic sequence
converges to a point in $Q$. The fourth convergence theorem is \cite[Theorem 2.5.4]{Zaknoon2003PhD}  
(essentially \cite[Theorem 25]{CensorZaknoon2003Draft}), in which
it is assumed that the sequence of weight functions $(w_{k})_{k=0}^{\infty}$
is fair (see Remark \textbf{\ref{rem:fair}} below), and there is
some fixed positive number $\xi$ such that for all $i\in I$ and
all $k\in\mathbb{N}\cup\{0\}$, if $w_{k}(i)>0$, then actually $w_{k}(i)>\xi$;
under these assumptions the theorem says that the algorithmic sequence
converges to a point in $Q$.

The technique used in \cite[Chapter 2]{Zaknoon2003PhD} and  \cite{CensorZaknoon2003Draft}
for establishing the convergence results has several similarities
to the technique used here, but there are also some differences, partly
because the settings are not identical. Examples of differences are
the use of the Pierra's product-space formulation in \cite{Zaknoon2003PhD,CensorZaknoon2003Draft}  
and not here, the use of Lemma \textbf{\ref{lem:PerturbedFejer}}
here (which is a new lemma not used elsewhere), the need to handle
extrapolations in \cite{Zaknoon2003PhD,CensorZaknoon2003Draft} and
perturbations here, etc.

\textbf{(3)} Algorithm 6.1 in \cite{Combettes2001inproc} is a general
variant of \cite[Algorithm 1]{AharoniCensor1989jour}, in which the
setting is a real Hilbert space, certain kind of perturbations are
allowed (essentially summable), cutters are used instead of just orthogonal
projections, and one allows an infinite index set $I$ where in each
iteration $k$ the sum is over a nonempty and finite subset $I_{k}$
of $I$ (namely, this algorithmic scheme is block-iterative in the
sense of Example \textbf{\ref{ex:BlockIterativeGeneralized}}, but with the
modification that the range of the selection function is not $2^{I}\backslash\left\{ \emptyset\right\} $
but rather the set of all nonempty and finite subsets of $I$). In
\cite[Theorem 6.6]{Combettes2001inproc} it is proved that the sequence
converges weakly to the feasible set, and under stronger assumptions
strong convergence holds.

On the other hand, a stronger assumption is assumed there, namely,
\cite[Algorithm 6.1, Part 4]{Combettes2001inproc} which says that
there is a fixed positive number $\delta_{1}$ such that in each iteration
one of the weights, which corresponds to an index $j$, is at least
as large as $\delta_{1}$, and at this same index $j$ another technical
condition holds (that is, $\|T_{j,k}x^{k}-x^{k}\|=\max_{i\in I_{k}}\|T_{i,k}x^{k}-x^{k}\|$,
where $T_{i,k}$ is the $i$-th operator at iteration $k$ and where
$i$ is taken from the index set $I_{k}$).

Moreover, in the relevant convergence result \cite[Theorem 6.6]{Combettes2001inproc} 
one assumes that the control sequence $(I_{k})_{k=0}^{\infty}$ is
admissible (which is a general condition, but weaker than a repetitive
control). Neither in \cite[Algorithm 1 and Theorem 1]{AharoniCensor1989jour}  
nor in Algorithm \textbf{\ref{alg:GBIP}} and Theorem \textbf{\ref{thm:GBIP}}
here these assumptions are imposed. Furthermore, the convergence in
\cite[Theorem 6.6]{Combettes2001inproc} is to the feasible set $Q$
rather than to $\widehat{Q}$ as in \cite[Theorem 1]{AharoniCensor1989jour}  
and in Theorem \textbf{\ref{thm:GBIP}} below (the equality $\widehat{Q}=Q$
holds under mild conditions, which in particular hold under the assumptions
in \cite[Theorem 6.6]{Combettes2001inproc}, but in general $Q\subset\widehat{Q}$).

\textbf{(4)} The setting in \cite[Theorem 5.8.15]{Cegielski2012book} 
and \cite[Theorem 9.27]{CegielskiCensor2011inproc} (both results
are essentially identical) is a possibly infinite-dimensional real
Hilbert space and not necessarily continuous cutters and the sum in
each iteration is over a nonempty subset $J_{k}$ of the finite index
set $I$; the cutters $T_{i}^{k}$ in the sum are dynamic, namely
they depend on both the iteration index $k$ and the sum index $i$;
however, these cutters should satisfy certain conditions, such as
the existence of a fixed and finite family $U_{i},i\in I$ of cutters
with a nonempty common fixed point set $Q$ such that $\bigcap_{i\in J_{k}}Q_{i}^{k}\supseteq Q$
(where $Q_{i}^{k}$ is the fixed point set of $T_{i}^{k}$) and also
that $U_{i}-Id$ is demi-closed at 0 for all $i\in I$. 

Under further assumptions, such as approximate regularity of the weight
functions, it is shown that the algorithmic sequence converges weakly
to $Q$, and if the space is finite-dimensional and one assumes less
(semi-regularity of the weight functions), then the algorithmic sequence
converges to $Q.$ The convergence result of Aharoni and Censor \cite[Algorithm 1]{AharoniCensor1989jour},
is essentially obtained as a consequence of \cite[Theorem 5.8.15]{Cegielski2012book} 
or \cite[Theorem 9.27]{CegielskiCensor2011inproc}, and is illustrated,
respectively, in \cite[Example 5.8.18]{Cegielski2012book} and \cite[Example 9.30]{CegielskiCensor2011inproc},
for the special case of orthogonal projections onto nonempty, closed
and convex subsets of the space.

It is worthwhile to note that no perturbations are allowed in \cite[Theorem 5.8.15]{Cegielski2012book}, 
\cite[Example 5.8.18]{Cegielski2012book}, \cite[Theorem 9.27]{CegielskiCensor2011inproc} 
and \cite[Example 9.30]{CegielskiCensor2011inproc}, and while it
seems that the method of \cite[Example 5.8.18]{Cegielski2012book}  
and \cite[Theorem 9.27]{CegielskiCensor2011inproc} can be generalized
to other cutters (by modifying the arguments in \cite[Example 26(e)]{CegielskiCensor2011inproc}),
it does not seem that it can be generalized to the perturbations that we consider
in Algorithm \textbf{\ref{alg:GBIP}}, since the method of \cite[Example 5.8.18]{Cegielski2012book} 
is heavily based on a certain nonnegative (usually positive) lower
bound on $\|x^{k+1}-q\|-\|x^{k}-q\|$, and this lower bound is eliminated
when the perturbations that we consider in Algorithm \textbf{\ref{alg:GBIP}}
appear. 

We also note that one can find in both \cite{Cegielski2012book} and 
\cite{CegielskiCensor2011inproc} other results which are closely
related to \cite[Theorem 5.8.15]{Cegielski2012book} and \cite[Theorem 9.27]{CegielskiCensor2011inproc},
such as \cite[Theorem 5.10.2]{Cegielski2012book} and \cite[Theorem 9.35]{CegielskiCensor2011inproc} 
(admissible step sizes), and \cite[Theorem 5.8.25]{Cegielski2012book}  
(for orthogonal projections), where in all of these cases no perturbations
appear.

\textbf{(5)} The setting in \cite[Theorem 4.1 and Theorem 4.5]{ReichZalas2016jour}  
is a possibly infinite-dimensional real Hilbert space, not necessarily
continuous cutters, but ones which should satisfy other conditions,
such as the Opial's demi-closedness principle (for the weak convergence
case); the algorithmic scheme allows strings and not just convex combinations
and relaxations as in Algorithm \textbf{\ref{alg:GBIP}} above. On
the other hand, in both \cite[Theorem 4.1]{ReichZalas2016jour} and 
\cite[Theorem 4.5]{ReichZalas2016jour} there is a certain restriction
on the control, namely, Condition (ii) there which is something in
the spirit of an almost cyclic control, and the whole convergence
is to the common fixed point set, while in our Theorem \textbf{\ref{thm:GBIP}}
such a restriction does not exist and the convergence is not necessarily
to a point in the common fixed point set, but rather to a point located
in the possibly larger set $\widehat{Q}$.

As an illustration to this last point, consider the case of strings
of length one and weights which, in each iteration, vanish with the
exception of one place in which they are equal to 1. This is the case
of a fully sequential algorithmic scheme, and Condition (ii) in \cite[Theorem 4.1]{ReichZalas2016jour}  
is the classical almost cyclic control. On the other hand, in our
Theorem \textbf{\ref{thm:GBIP}} one allows the control to be repetitive,
that is, more general, as explained in Example \textbf{\ref{ex:FullySequential}}
above. No perturbations are allowed in \cite[Theorem 4.1]{ReichZalas2016jour},
while in \cite[Theorem 4.5]{ReichZalas2016jour} summable perturbations
are allowed but the operators must be firmly nonepxansive, and hence
(see Section \textbf{\ref{sec:NotationDefinitions}}) must be continuous
cutters.

\textbf{(6)} The setting in \cite[Theorem 3.1, Theorem 3.2]{KolobovReichZalas2017jour} 
is a possibly infinite-dimensional real Hilbert space and cutters
which are not necessarily continuous, and the sum in each iteration
is over a nonempty subset $I_{k}$ of the finite index set $I$ (namely,
this algorithmic scheme is block-iterative in the generalized sense
of Example \textbf{\ref{ex:BlockIterativeGeneralized}}). The cutters, however,
should satisfy other conditions which we do not impose in Algorithm
\textbf{\ref{alg:GBIP}}, such as having a representation to their
fixed point sets as the zero-level-sets of well-behaved proximity
functions. 

Additional conditions which are not imposed in Algorithm \textbf{\ref{alg:GBIP}}
but are imposed in \cite{KolobovReichZalas2017jour} are that the
upper bound $\tau_{2}$ on the relaxation parameters should be at
least 1, and that there is a positive number $\omega^{-}\in[0,1]$
such that $w_{k}(i)\geqq\omega^{-}$ for all $k\in\mathbb{N}\cup\{0\}$
and all $i\in I$, namely the algorithmic scheme in \cite{KolobovReichZalas2017jour}  
is fully simultaneous with a strictly positive lower bound on the
weights. In addition, one needs to impose there an assumption (Condition
(ii)) which is essentially an almost cyclic control and no perturbations
are allowed there. Under these and additional assumptions, the authors
of \cite{KolobovReichZalas2017jour} derive weak, strong and linear
convergence of the iterative algorithmic scheme to a common fixed
point of the given cutters.

\textbf{(7)} Other variants of BIP, for more restricted cutters or for other types of operators, as well as associated convergence results, appear in the following publications.
\cite[The algorithmic scheme on p. 378, Theorem 3.20, Corollary 3.22, Corollary 3.24, Corollary 3.25]{BauschkeBorwein1996jour}:
Here finite and infinite-dimensional real Hilbert spaces with firmly nonexpansive operators are considered (many other convergence results for orthogonal projections in \cite[Sections 4--6]{BauschkeBorwein1996jour}); \cite[Algorithm (2.2), Theorems 1, 2]{FlamZowe1990jour}:
The finite-dimensional case with projections onto separating hyperplanes;
\cite[Theorem 4.4]{ButnariuCensor1990jour}: This is an almost simultaneous
BIP method for orthogonal projections in a finite-dimensional Euclidean
space; \cite[The method of (1), Theorem 1]{ButnariuCensor1994jour}:
This is an almost simultaneous BIP method for orthogonal projections
in an infinite-dimensional Hilbert space; \cite[The method of (8), Theorem 4.1]{AleynerReich2008jour}:
Firmly nonexpansive mappings in finite-dimensional strictly convex
normed spaces; \cite[Algorithm 6.5, Theorem 6.4]{Combettes2000jour}:
A modified BIP method with orthogonal projections in Hilbert spaces;
\cite[Theorem 4.1 and some theorems in Section 5]{IbarakiTakahashi2008jour}:
a modified fully simultaneous BIP method with generalized nonexpansive
mappings in smooth and uniformly convex Banach spaces.

\section{The global convergence theorem and its proof}\label{sec:Convergence}
In this section we formulate and prove our global convergence theorem concerning Algorithm \textbf{\ref{alg:GBIP}}.

\begin{theorem} \label{thm:GBIP} Under the notations and assumptions
of Sections \textbf{\ref{sec:NotationDefinitions}} and \textbf{\ref{sec:GBIP}},
assume that $T_{i}$ is continuous for all $i\in I$. Denote $\widehat{I}:=\{i\in I\mid\sum_{k=0}^{\infty} w_{k}(i)=\infty\}$
and $\widehat{Q}:=\cap_{i\in\widehat{I}}\,Q_{i}$, with the convention
that $\widehat{Q}:=X$ if $\widehat{I}=\emptyset$. Then any sequence
defined in \textbf{(\ref{eq:GBIP})} converges to a point in $\widehat{Q}\cap B[x^{0},2\sigma]$.
In particular, if $\widehat{I}=I$, then the sequence defined in \textbf{(\ref{eq:GBIP})}
converges to a point in $Q\cap B[x^{0},2\sigma]$. \end{theorem}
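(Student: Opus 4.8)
The plan is to establish convergence through the standard three-stage
paradigm for block-iterative projection methods, adapted to handle the
adaptive perturbations allowed in Algorithm \textbf{\ref{alg:GBIP}}. The
three stages are: (i) show the sequence $(x^{k})$ is bounded and, in fact,
stays within $B[x^{0},2\sigma]$; (ii) show that the ``residuals''
$\|T_{i}(x^{k})-x^{k}\|$ tend to zero along the indices $i\in\widehat{I}$;
and (iii) extract a convergent subsequence, identify its limit as a point
of $\widehat{Q}$, and upgrade subsequential convergence to convergence of
the full sequence.

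\medskip

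\noindent\textbf{Stage (i): a perturbed Fej\'er inequality.} First I would
apply Lemma \textbf{\ref{lem:PerturbedFejer}} (the perturbed Fej\'er
monotonicity lemma advertised in the introduction). The whole design of the
error bound \textbf{(\ref{eq:abs(eki)})}, with its curious
$\sqrt{\zeta_{k,i}}$ denominator, is engineered precisely so that for each
$q\in Q$ one obtains an inequality of the form
\begin{equation*}
\|x^{k+1}-q\|^{2}\leq\|x^{k}-q\|^{2}
  -\beta_{k}\sum_{i\in I}w_{k}(i)\lambda_{k}(2-\lambda_{k})\|T_{i}(x^{k})-x^{k}\|^{2},
\end{equation*}
for some controllable factor $\beta_{k}>0$, rather than the clean Fej\'er
decrease one gets when perturbations vanish. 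Because each $T_{i}$ is a
cutter, the separator inequality \textbf{(\ref{eq:PropertyT})} supplies the
key term $\langle x^{k}-T_{i}(x^{k}),q-T_{i}(x^{k})\rangle\leq0$; the
perturbation contributes a correction that the bound
\textbf{(\ref{eq:abs(eki)})} keeps under the subtracted quantity. Summing
the resulting telescoping inequality over $k$ shows both that
$(\|x^{k}-q\|)$ is essentially nonincreasing (hence $(x^{k})$ is bounded)
and that the weighted residual series is finite.

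\medskip

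\noindent\textbf{Stages (ii)--(iii): residuals vanish and the limit is feasible.}
From the summability of
$\sum_{k}w_{k}(i)\lambda_{k}(2-\lambda_{k})\|T_{i}(x^{k})-x^{k}\|^{2}$,
together with $\lambda_{k}(2-\lambda_{k})\geq\tau_{1}\tau_{2}>0$, I would
deduce that $\sum_{k}w_{k}(i)\|T_{i}(x^{k})-x^{k}\|^{2}<\infty$ for each
$i$. Now fix $i\in\widehat{I}$, so that $\sum_{k}w_{k}(i)=\infty$; a
bounded-sequence argument (the iterates lie in a compact set, and each $T_{i}$
is \emph{continuous}, so $\|T_{i}(x^{k})-x^{k}\|$ is bounded) forces
$\liminf_{k}\|T_{i}(x^{k})-x^{k}\|=0$. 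Passing to a subsequence
$(x^{k_{\ell}})$ converging to some $x^{*}$ and using continuity of each
$T_{i}$, $i\in\widehat{I}$, gives $T_{i}(x^{*})=x^{*}$, i.e.
$x^{*}\in\widehat{Q}$. Finally, since $x^{*}\in\widehat{Q}$ and $Q\neq
\emptyset$ guarantees the perturbed Fej\'er property holds relative to
points of $Q$, I would combine Fej\'er monotonicity with respect to $x^{*}$
(using $x^{*}$ in place of $q$, which is legitimate once $x^{*}$ is a common
fixed point of the relevant operators) to conclude $\|x^{k}-x^{*}\|$
converges; as a subsequence tends to $0$, the whole sequence converges to
$x^{*}$. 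The containment in $B[x^{0},2\sigma]$ follows from the
monotonicity started at $k=0$ together with $\sigma>d(x^{0},Q)$.

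\medskip

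\noindent\textbf{The main obstacle.} The delicate part is \emph{Stage (i)},
specifically verifying that the adaptive error bound
\textbf{(\ref{eq:abs(eki)})} is exactly strong enough to preserve a
genuine (strict) Fej\'er decrease without summability of the errors. Unlike
the classical case, one cannot assume $\sum_{k}\|e^{k}\|<\infty$ (as the
first Remark after the algorithm stresses), so the usual perturbed-Fej\'er
arguments do not apply directly; the entire quantitative content is hidden in
solving the quadratic in $\|e^{k,i}\|$ whose positive root produces the
denominator $\sqrt{\zeta_{k,i}}+\lambda_{k}\|T_{i}(x^{k})-x^{k}\|+2\sigma$.
I expect the careful case analysis of this quadratic — ensuring the
net coefficient multiplying $\|T_{i}(x^{k})-x^{k}\|^{2}$ remains bounded
below by a positive constant uniformly in $k$ — to be the heart of the
argument, which is presumably exactly what Lemma \textbf{\ref{lem:beta}}
(mentioned in variant (1) of Section \textbf{\ref{sec:VariantsBIP}}) is
designed to carry out.
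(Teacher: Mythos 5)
Your Stages (i) and (ii) are essentially sound, and Stage (i) is in fact sharper than what the paper itself proves: the factor $\frac{1}{2}$ and the $2\sigma$ in \textbf{(\ref{eq:abs(eki)})} do yield, for every $q\in Q\cap B[x^{0},2\sigma]$ (with the induction hypothesis $\|x^{k}-q\|\leq2\sigma$ maintained by the inequality itself), the quantitative decrease $\|x^{k+1}-q\|^{2}\leq\|x^{k}-q\|^{2}-\frac{1}{2}\lambda_{k}(2-\lambda_{k})\sum_{i\in I}w_{k}(i)\|T_{i}(x^{k})-x^{k}\|^{2}$, by convexity of $\|\cdot\|^{2}$ and the quadratic-root computation; the paper's Lemma \textbf{\ref{lem:PerturbedFejer}} only gives the non-quantitative $\|y-q\|\leq\|x-q\|$ and compensates later with compactness arguments. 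The genuine gap is Stage (iii). From $\sum_{k}w_{k}(i)\|T_{i}(x^{k})-x^{k}\|^{2}<\infty$ and $\sum_{k}w_{k}(i)=\infty$ you obtain, for each $i\in\widehat{I}$ \emph{separately}, a subsequence along which $\|T_{i}(x^{k})-x^{k}\|\to0$; but these subsequences depend on $i$ and may be pairwise disjoint, so you cannot simply ``pass to a subsequence $(x^{k_{\ell}})$ converging to some $x^{*}$'' with $T_{i}(x^{*})=x^{*}$ for \emph{all} $i\in\widehat{I}$ simultaneously. Nothing you have proved excludes the iterates oscillating forever between neighborhoods of two points $u_{1}\in Q_{1}\setminus Q_{2}$ and $u_{2}\in Q_{2}\setminus Q_{1}$ (both equidistant from $q$, which Fej\'er monotonicity permits), with the weights conspiring to put mass on $T_{i}$ mainly when its residual is already small: your summability bound does not forbid infinitely many transitions, since the per-step displacement is controlled by $\bigl(\sum_{i}w_{k}(i)\|T_{i}(x^{k})-x^{k}\|^{2}\bigr)^{1/2}$ and a summable series can have a divergent series of square roots. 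Moreover, even granting an accumulation point $x^{*}\in\widehat{Q}$, your closing appeal to ``Fej\'er monotonicity with respect to $x^{*}$'' is not available: $x^{*}$ lies in $Q_{i}$ only for $i\in\widehat{I}$, while operators with $i\notin\widehat{I}$ may act with positive weight at infinitely many iterations, and for them the cutter inequality with $q:=x^{*}$ fails (this part is repairable by a quasi-Fej\'er argument using $\sum_{k}\sum_{i\notin\widehat{I}}w_{k}(i)<\infty$, plus a check that the error bound calibrated to $2\sigma$ still works for points at distance up to about $4\sigma$, but you do not give it).

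The missing idea is exactly where the paper invests its main effort: it first proves the sequence has \emph{at most one} accumulation point (Step 2 of its proof) by a sojourn/escape argument. If $u\neq u^{\prime}$ were two accumulation points, then near $u$ the remaining ``decrease budget'' $\|x^{k}-q\|-\|u-q\|$ is arbitrarily small; Lemma \textbf{\ref{lem:beta}} shows that every step spent inside a small ball around $u$ costs budget at least $\beta$ times the weight placed on operators not fixing $u$, while Lemma \textbf{\ref{lem:eta}} shows the total displacement away from $u$ is at most $\eta$ times that same weight; hence the sequence cannot escape the ball, forcing $u\in Q$, whereupon Fej\'er monotonicity with respect to $u$ (now legitimate, since $u\in Q$) gives convergence to $u$ and a contradiction. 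Only after full convergence is established does the paper identify the limit as a point of $\widehat{Q}$ (Step 3), by essentially the residual argument you have in mind. Your quantitative Stage (i) inequality could in fact replace Lemma \textbf{\ref{lem:beta}} inside such an escape argument (on a compact set disjoint from $Q_{i}$, continuity bounds the residuals below), so your approach is salvageable, but the escape argument itself---the step that rules out oscillation---is absent from the proposal. A small mislabel as well: the quadratic analysis of $\|e^{k,i}\|$ that you attribute to Lemma \textbf{\ref{lem:beta}} is carried out in Lemma \textbf{\ref{lem:PerturbedFejer}}; Lemma \textbf{\ref{lem:beta}} is a compactness/infimum argument, and the heart of the proof is Stage (iii), not Stage (i).
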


The proof of Theorem \textbf{\ref{thm:GBIP}} is based on several
claims which are formulated and proved below. Before proceeding with
these claims, we need a further notation: given $x\in X$, $q\in X$,
$\lambda\in[0,2]$, $\theta\in[0,\infty)$, and $i\in I$, we denote
by $E_{\theta}(x,q,\lambda,i)$ the set of all $e\in X$ which satisfy

\begin{equation}\label{eq:abs(e)}
\|e\|\leq\frac{\theta\cdot\lambda(2-\lambda)\|T_{i}(x)-x\|^{2}}{\sqrt{\zeta}+\lambda\|T_{i}(x)-x\|+\|x-q\|}
\end{equation}

with 
\begin{equation}
\zeta:=(\lambda\|T_{i}(x)-x\|+\|x-q\|)^{2}+\lambda(2-\lambda)\|T_{i}(x)-x\|^{2},
\end{equation}

where both sides of \textbf{(\ref{eq:abs(e)})} mean zero if the denominator of the fraction on the right-hand side, and hence also the numerator, vanish. In addition, given a weight function $w:I\to[0,1]$, we denote by $E_{\theta}(x,q,\lambda,w)$ the set of all $e\in X$ which satisfy $e=\sum_{i\in I}w(i)e^{i}$, where $e^{i}\in E_{\theta}(x,q,\lambda,i)$ for all $i\in I$.

We start with the following apparently new lemma, which seems to be
of independent interest. Since it enables us to prove the (essentially)
Fej\'er monotonicity of the sequence (Lemma \textbf{\ref{lem:Fejer-kQ}}
below), and since a certain perturbation appears in it (the term $e$),
Lemma \textbf{\ref{lem:PerturbedFejer}} can be thought of as establishing
a Fej\'er monotonicity phenomenon in a perturbed form. As explained
in Remark \textbf{\ref{rem:StableQuasiNE}} below, Lemma \textbf{\ref{lem:PerturbedFejer}}
actually shows that the operator $Id+\lambda(T-Id)$ is not only quasi-nonexpansive,
but rather that its quasi-nonexpansiveness is stable under small perturbations,
and that this phenomenon holds in a general setting.

\begin{lemma} \label{lem:PerturbedFejer}
{\bf (Fej\'er monotonicity in a perturbed form):} Suppose that $T:X\to X$ is a separator of a
nonempty subset $S\subseteq X$. Given $x\in X$, $q\in S$ and $\lambda\in[0,2]$,
if $e\in X$ satisfies (\textbf{\ref{eq:abs(e)}}) with $\theta:=1$
and $T$ instead of $T_{i}$, and if 
\begin{equation}\label{eq:y}
y:=x+\lambda(T(x)-x)+e,
\end{equation}
then 
\begin{equation}\label{eq:PerturbedFejer}
\|y-q\|\leq\|x-q\|.
\end{equation}
If, in addition, $x\neq T(x)$, $0<\lambda<2$ and $e\in X$ satisfies
\textbf{(\ref{eq:abs(e)})} with strict inequality, then 
\begin{equation}\label{eq:PerturbedFejerStrict}
\|y-q\|<\|x-q\|.
\end{equation}
\end{lemma}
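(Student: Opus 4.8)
The plan is to reduce the desired norm inequality to a single scalar quadratic inequality in $\|e\|$, the whole point being that the denominator in \textbf{(\ref{eq:abs(e)})} is engineered precisely so that this scalar inequality matches the slack produced by the ordinary (unperturbed) Fej\'er estimate. First I would abbreviate $u:=T(x)-x$ and rewrite the separator hypothesis \textbf{(\ref{eq:PropertyT})}. Since $q-T(x)=-(x-q)-u$, expanding $\langle x-T(x),q-T(x)\rangle\le 0$ yields the key inequality
\[
\langle u,\,x-q\rangle\le-\|u\|^{2}.
\]
Next I would package the error bound into a clean form. Writing $s:=\lambda\|T(x)-x\|+\|x-q\|$ and $t:=\lambda(2-\lambda)\|T(x)-x\|^{2}$, one checks that $\zeta=s^{2}+t$, so that rationalizing the denominator gives
\[
\frac{t}{\sqrt{s^{2}+t}+s}=\sqrt{s^{2}+t}-s,
\]
and hence the hypothesis $\|e\|\le t/(\sqrt{\zeta}+s)$ (with $\theta=1$) is equivalent, after squaring $\|e\|+s\le\sqrt{s^{2}+t}$, to the scalar inequality $\|e\|^{2}+2s\|e\|\le t$.

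With this in hand the estimate is short. I would expand
\[
\|y-q\|^{2}=\|(x-q)+\lambda u+e\|^{2}=\|(x-q)+\lambda u\|^{2}+2\langle(x-q)+\lambda u,\,e\rangle+\|e\|^{2}.
\]
For the first term, the separator inequality above together with $\lambda\in[0,2]$ gives the standard relaxed-cutter bound $\|(x-q)+\lambda u\|^{2}\le\|x-q\|^{2}-\lambda(2-\lambda)\|u\|^{2}=\|x-q\|^{2}-t$. For the cross term I would use Cauchy--Schwarz followed by the triangle inequality $\|(x-q)+\lambda u\|\le\|x-q\|+\lambda\|u\|=s$, obtaining $2\langle(x-q)+\lambda u,\,e\rangle\le 2s\|e\|$. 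Substituting both bounds and then invoking the scalar inequality $\|e\|^{2}+2s\|e\|\le t$ collapses the right-hand side to $\|x-q\|^{2}-t+t=\|x-q\|^{2}$, which proves \textbf{(\ref{eq:PerturbedFejer})}.

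For the strict statement \textbf{(\ref{eq:PerturbedFejerStrict})}, note that $x\neq T(x)$ and $0<\lambda<2$ force $t=\lambda(2-\lambda)\|T(x)-x\|^{2}>0$; a strict inequality in \textbf{(\ref{eq:abs(e)})} then gives $\|e\|^{2}+2s\|e\|<t$, so the final step becomes strict and yields $\|y-q\|^{2}<\|x-q\|^{2}$ (this works even if $e=0$, since then the $-t$ slack alone is strictly negative). The degenerate case in which the denominator of \textbf{(\ref{eq:abs(e)})} vanishes forces $s=t=0$, hence $e=0$ and $(x-q)+\lambda u=0$, so $\|y-q\|=0=\|x-q\|$ and the convention makes the claim trivial. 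I do not expect a genuine obstacle here: the only non-mechanical step is recognizing the rationalization $t/(\sqrt{s^{2}+t}+s)=\sqrt{s^{2}+t}-s$ and thereby seeing that the admissible perturbation magnitude is exactly calibrated to the quadratic slack $t$ that the Fej\'er estimate provides; once this identity is spotted, the remainder is Cauchy--Schwarz and the triangle inequality.
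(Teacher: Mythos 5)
Your proof is correct and takes essentially the same route as the paper's: both expand $\|y-q\|^2$, use the separator property to extract the slack $-\lambda(2-\lambda)\|T(x)-x\|^{2}$, bound the error cross terms by $2\bigl(\lambda\|T(x)-x\|+\|x-q\|\bigr)\|e\|$ via Cauchy--Schwarz, and use the rationalization identity $\alpha_{2}/(\sqrt{\alpha_{1}^{2}+\alpha_{2}}+\alpha_{1})=\sqrt{\alpha_{1}^{2}+\alpha_{2}}-\alpha_{1}$ to convert the hypothesis on $\|e\|$ into the scalar quadratic inequality that exactly absorbs this slack, with the same reasoning for the strict case. The only cosmetic difference is that you group $(x-q)+\lambda(T(x)-x)$ and apply Cauchy--Schwarz once followed by the triangle inequality, whereas the paper bounds the two cross terms separately.
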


\begin{proof} As a result of \textbf{(\ref{eq:y})}, \textbf{(\ref{eq:PropertyT})},
the Cauchy-Schwarz inequality and the assumption that $\lambda\in[0,2]$,
we have

\begin{multline}
\|y-q\|^{2}=\|x-q\|^{2}+\|\lambda(T(x)-x)+e\|^{2}+2\langle x-q,\lambda(T(x)-x)+e\rangle\\
=\|x-q\|^{2}+\lambda^{2}\|T(x)-x\|^{2}+\|e\|^{2}+2\lambda\langle T(x)-x,e\rangle+2\lambda\langle x-q,T(x)-x\rangle+2\langle x-q,e\rangle\\
=\|x-q\|^{2}+\lambda^{2}\|T(x)-x\|^{2}+2\lambda\langle T(x)-q,T(x)-x\rangle-2\lambda\langle T(x)-x,T(x)-x\rangle\\
+2\lambda\langle T(x)-x,e\rangle+2\langle x-q,e\rangle+\|e\|^{2}\\
=\|e\|^{2}+2\lambda\langle T(x)-x,e\rangle+2\langle x-q,e\rangle+\|x-q\|^{2}-\lambda(2-\lambda)\|T(x)-x\|^{2}\\
+2\lambda\langle x-T(x),q-T(x)\rangle\\
\leq\|e\|^{2}+2(\lambda\|T(x)-x\|+\|x-q\|)\|e\|-\lambda(2-\lambda)\|T(x)-x\|^{2}+\|x-q\|^{2}\\
\leq\|x-q\|^{2}.\label{eq:abs(y-q)<=abs(x-q)}
\end{multline}
To derive the last inequality in \textbf{(\ref{eq:abs(y-q)<=abs(x-q)})}
we used the following simple facts: (i) from elementary analysis and
algebra, given two nonnegative numbers $\alpha_{1}$ and $\alpha_{2}$,
the inequality $t^{2}+2\alpha_{1}t-\alpha_{2}\leq0$ for nonnegative
$t$ holds whenever $t\in[0,\sqrt{\alpha_{1}^{2}+\alpha_{2}}-\alpha_{1}]$;
(ii) the simple identity $\sqrt{\alpha_{1}^{2}+\alpha_{2}}-\alpha_{1}=\alpha_{2}/(\sqrt{\alpha_{1}^{2}+\alpha_{2}}+\alpha_{1})$
holds; (iii) the equation \textbf{(\ref{eq:abs(e)})} actually says that
$t\leq\alpha_{2}/(\sqrt{\alpha_{1}^{2}+\alpha_{2}}+\alpha_{1})$ for
$t:=\|e\|$, $\alpha_{1}:=\lambda\|T(x)-x\|+\|x-q\|$ and $\alpha_{2}:=\lambda(2-\lambda)\|T(x)-x\|^{2}$;
(iv) the last inequality in \textbf{(\ref{eq:abs(y-q)<=abs(x-q)})}
can be written as $t^{2}+2\alpha_{1}t-\alpha_{2}+\|x-q\|^{2}\leq\|x-q\|^{2}$.

Finally, if $x\neq T(x)$ and $0<\lambda<2$, then the number on the
right-hand side of \textbf{(\ref{eq:abs(e)})} is positive. Hence 
there are vectors $e\in X$ fulfilling the strict version of  \textbf{(\ref{eq:abs(e)})}, namely, these vectors are all the ones  whose magnitudes are smaller than the right-hand side of \textbf{(\ref{eq:abs(e)})}. Let $e$ be such a vector. This means that in the notation of the previous paragraph, 
$0\leq t<\alpha_{2}/(\sqrt{\alpha_{1}^{2}+\alpha_{2}}+\alpha_{1})=\sqrt{\alpha_{1}^{2}+\alpha_{2}}-\alpha_{1}$
and $\alpha_{2}>0$. These inequalities and elementary properties
of quadratic inequalities imply that $t^{2}+2\alpha_{1}t-\alpha_{2}<0$.
Thus, the last inequality in \textbf{(\ref{eq:abs(y-q)<=abs(x-q)})}
is strict. \end{proof}

\begin{remark} \label{rem:StableQuasiNE}Lemma \textbf{\ref{lem:PerturbedFejer}}
is rather general, since the space $X$ can be an arbitrary real inner
product space and not necessarily a finite-dimensional Euclidean space,
and the operator $T$ there is not necessarily a cutter and not necessarily
continuous. Moreover, this lemma shows that the operator $T$ which
appears there, and also a relaxed version of it, exhibit a certain
stability property.

Indeed, we recall that an operator $T:X\to X$ with a nonempty fixed
point set $Fix(T)$ is called quasi-nonexpansive if $\|Tx-q\|\leq\|x-q\|$
for all $x\in X$ and $q\in Fix(T)$. Now, if we take $e:=0$ and
$\lambda:=1$ in Lemma \textbf{\ref{lem:PerturbedFejer}} then it
follows that any cutter $T$ is quasi-nonexpansive. Furthermore, Lemma
\textbf{\ref{lem:PerturbedFejer}} shows that the operator $T_{\lambda}:=Id+\lambda(T-Id)$
is quasi-nonexpansive for every $\lambda\in[0,2]$, where $Id$ is
the identity operator, and, as a matter of fact, the property of being
quasi-nonexpansive holds true even if we translate $T_{\lambda}$
by a vector $e$ which satisfies \textbf{(\ref{eq:abs(e)})} with $\theta:=1$.
In other words, the property of $T_{\lambda}$ being quasi-nonexpansive
is stable under certain small perturbations. 
\end{remark}

\begin{lemma}\label{lem:eta} 
Let $z\in X$ be given, and denote
$I_{z}:=\{i\in I\mid z\notin Q_{i}\}.$ Suppose that $G$ is a nonempty
and compact subset of $X$. Then there exists an $\eta\in[0,\infty)$
such that for all $x\in G$, all $\lambda\in[0,2]$, all weight functions
$w:I\to[0,1]$ and all $e\in E_{1}(x,z,\lambda,w)$, one has 
\begin{equation}\label{eq:Sum-eta}
\|x+\lambda(T_{w}(x)-x)+e-z\|\leq\|x-z\|+\eta\sum_{i\in I_{z}}w(i).
\end{equation}
\end{lemma}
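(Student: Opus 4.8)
The plan is to exploit the convexity of the norm to reduce the estimate to a sum of per-operator contributions, and then to treat the indices $i\notin I_{z}$ and $i\in I_{z}$ separately. First I would use the identity $\sum_{i\in I}w(i)=1$ together with the definitions $T_{w}(x)=\sum_{i\in I}w(i)T_{i}(x)$ and $e=\sum_{i\in I}w(i)e^{i}$ to rewrite the vector inside the norm as a convex combination,
\[
x+\lambda(T_{w}(x)-x)+e-z=\sum_{i\in I}w(i)\bigl[x+\lambda(T_{i}(x)-x)+e^{i}-z\bigr],
\]
so that the triangle inequality yields
\[
\|x+\lambda(T_{w}(x)-x)+e-z\|\leq\sum_{i\in I}w(i)\,\bigl\|x+\lambda(T_{i}(x)-x)+e^{i}-z\bigr\|.
\]
It then suffices to bound each summand uniformly.

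For an index $i\notin I_{z}$ we have $z\in Q_{i}=\textnormal{Fix}(T_{i})$, and since $T_{i}$ is a cutter it is a separator of $Q_{i}$. Because $e^{i}\in E_{1}(x,z,\lambda,i)$ satisfies \textbf{(\ref{eq:abs(e)})} with $\theta:=1$ and $q:=z$, Lemma \textbf{\ref{lem:PerturbedFejer}} applies with $S:=Q_{i}$ and gives $\|x+\lambda(T_{i}(x)-x)+e^{i}-z\|\leq\|x-z\|$. For an index $i\in I_{z}$ I would instead use the crude bound $\|x+\lambda(T_{i}(x)-x)+e^{i}-z\|\leq\|x-z\|+\lambda\|T_{i}(x)-x\|+\|e^{i}\|$ and control the last two terms. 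Here the continuity of $T_{i}$ together with the compactness of $G$ provides a finite constant $M_{i}:=\max_{x\in G}\|T_{i}(x)-x\|$, whence $\lambda\|T_{i}(x)-x\|\leq 2M_{i}$. To bound $\|e^{i}\|$ I would rewrite the right-hand side of \textbf{(\ref{eq:abs(e)})} via the identity $\alpha_{2}/(\sqrt{\alpha_{1}^{2}+\alpha_{2}}+\alpha_{1})=\sqrt{\alpha_{1}^{2}+\alpha_{2}}-\alpha_{1}\leq\sqrt{\alpha_{2}}$ with $\alpha_{1}:=\lambda\|T_{i}(x)-x\|+\|x-z\|$ and $\alpha_{2}:=\lambda(2-\lambda)\|T_{i}(x)-x\|^{2}$; since $\lambda(2-\lambda)\leq1$ this gives $\|e^{i}\|\leq\sqrt{\lambda(2-\lambda)}\,\|T_{i}(x)-x\|\leq M_{i}$. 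Hence $\|x+\lambda(T_{i}(x)-x)+e^{i}-z\|\leq\|x-z\|+3M_{i}$ for every $i\in I_{z}$, and I would set $\eta:=3\max_{i\in I}M_{i}$ (in the degenerate case where the denominator of \textbf{(\ref{eq:abs(e)})} vanishes the convention in the definition of $E_{1}$ sets $e^{i}:=0$ and the corresponding summand is at most $\|x-z\|$, so the bound holds a fortiori).

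Combining the two cases and using $\sum_{i\in I}w(i)=1$ gives
\[
\sum_{i\in I}w(i)\bigl\|x+\lambda(T_{i}(x)-x)+e^{i}-z\bigr\|\leq\sum_{i\notin I_{z}}w(i)\|x-z\|+\sum_{i\in I_{z}}w(i)\bigl(\|x-z\|+\eta\bigr)=\|x-z\|+\eta\sum_{i\in I_{z}}w(i),
\]
which is exactly \textbf{(\ref{eq:Sum-eta})}. I expect the only point requiring genuine care to be the uniform boundedness of $\|T_{i}(x)-x\|$ on $G$: this is precisely where the continuity of the cutters (assumed in Theorem \textbf{\ref{thm:GBIP}}) together with the compactness of $G$ is essential, since otherwise the contributions of the indices $i\in I_{z}$ could be unbounded and no finite $\eta$ would exist. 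Everything else is the convexity reduction, a single invocation of the perturbed Fej\'er lemma, and the elementary estimate $\sqrt{\alpha_{1}^{2}+\alpha_{2}}-\alpha_{1}\leq\sqrt{\alpha_{2}}$.
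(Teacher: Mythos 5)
Your proof is correct, and while its skeleton coincides with the paper's (the same convex decomposition followed by the triangle inequality, Lemma \textbf{\ref{lem:PerturbedFejer}} applied to the indices $i\notin I_{z}$, and compactness plus continuity of the $T_{i}$ to control the indices $i\in I_{z}$), you handle the one genuinely technical point --- the existence of a finite $\eta$ --- differently. The paper defines $\eta$ abstractly as the supremum of $g_{i}(x,\lambda,\widetilde{e}):=\|x+\lambda(T_{i}(x)-x)+\widetilde{e}-z\|$ over $i\in I_{z}$ and the product set $G\times[0,2]\times B[0,\mu_{i}]$, and must then prove finiteness in two stages: first bounding $\|\widetilde{e}\|\leq 2\|T_{i}(x)-x\|\leq\mu_{i}$ so that the perturbations live in a compact ball, then invoking the Weierstrass theorem for the jointly continuous function $g_{i}$ on the compact product; since this $\eta$ bounds the full norm rather than the increment, the final computation also needs the extra step $\eta-\|x-z\|\leq\eta$. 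You instead produce an explicit constant, $\eta:=3\max_{i\in I}M_{i}$ with $M_{i}:=\max_{x\in G}\|T_{i}(x)-x\|$, by bounding the increment directly via the triangle inequality together with the estimate $\|e^{i}\|\leq\sqrt{\alpha_{1}^{2}+\alpha_{2}}-\alpha_{1}\leq\sqrt{\alpha_{2}}\leq\|T_{i}(x)-x\|$, which requires only a single application of the Weierstrass theorem on $G$ itself and makes the closing convexity computation immediate. Your route is shorter and more self-contained (no compactness argument on the product space, no separate continuity discussion for $g_{i}$), and it has the side benefit of an explicit, computable $\eta$; the paper's supremum-based definition is the natural ``smallest possible'' constant of its kind but costs more work to justify. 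Your treatment of the degenerate case (vanishing denominator in \textbf{(\ref{eq:abs(e)})} forcing $e^{i}=0$) is also correct and matches the paper's convention.
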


\begin{proof} 
Define for $I_{z}\neq\emptyset$

\begin{equation}
\eta:=\sup\{\|x+\lambda(T_{i}(x)-x)+\widetilde{e}-z\|\mid x\in G,\lambda\in[0,2],i\in I_{z},\widetilde{e}\in E_{1}(x,z,\lambda,i)\},\label{eq:eta1}
\end{equation}

and for $I_{z}=\emptyset$ define

\begin{equation}
\eta:=0.\label{eq:eta2}
\end{equation}

If \textbf{(\ref{eq:eta2})} holds, then obviously $\eta\in[0,\infty)$.
Next we show that $\eta\in[0,\infty)$ also when \textbf{(\ref{eq:eta1})}
holds, from which it will follow that \textbf{(\ref{eq:Sum-eta})}
holds regardless if $I_{z}\neq\emptyset$ or not.

Since $G$ is a compact set and since for all $i\in I$ the real function
$f_{i}(x):=2\|T_{i}(x)-x\|$ is continuous on $G$ as a result of
the continuity of the norm and the assumption on $T_{i}$, it follows
from the the well-known Weierstrass Theorem  (that is, the Extreme Value Theorem of calculus) that $f_{i}$ is bounded
from above on $G$. Let $\mu_{i}>0$ be any such upper bound. Elementary
algebra shows that any $\widetilde{e}$ which satisfies \textbf{(\ref{eq:abs(e)})}
(with $\theta:=1$ and $\widetilde{e}$ instead of $e$) also satisfies
$\|\widetilde{e}\|\leq f_{i}(x)$, and hence $\|\widetilde{e}\|\leq\mu_{i}$
whenever $(x,\lambda)\in G\times[0,2]$ and $\widetilde{e}\in E_{1}(x,z,\lambda,i)$.

Since 

\begin{equation}\label{eq:g_i}
g_{i}(x,\lambda,\widetilde{e}):=\|x+\lambda(T_{i}(x)-x)+\widetilde{e}-z\|
\end{equation}

is continuous on $G\times[0,2]\times B[0,\mu_{i}]$ for all $i\in I$
as a result of the continuity of the norm and the assumption on $T_{i}$, the Weierstrass Theorem ensures that $g_{i}$ is bounded from above on $G\times[0,2]\times B[0,\mu_{i}]$. Since $I$ and hence $I_{z}$
are finite, we conclude from the previous assertions and the definition
of $\eta$ that

\begin{equation}
0\leq\eta\leq\max_{i\in I_{z}}\sup\{g_{i}(x,\lambda,\widetilde{e})\mid(x,\lambda,\widetilde{e})\in G\times[0,2]\times B[0,\mu_{i}]\}<\infty,
\end{equation}

and so $\eta\in[0,\infty)$.

Now let $x\in G$, $\lambda\in[0,2]$ be arbitrary, let $w:I\to[0,1]$
be an arbitrary weight function and let $e\in E_{1}(x,z,\lambda,w)$
be arbitrary. Since for all $i\notin I_{z}$ one has $z\in Q_{i}$,
it follows from Lemma \textbf{\ref{lem:PerturbedFejer}} (with $T:=T_{i}$,
$S:=Q_{i}$, $q:=z$ and $e^{i}$ instead of $e$) that

\begin{equation}
\|x+\lambda(T_{i}(x)-x)+e^{i}-z\|\leq\|x-z\|.
\end{equation}

These inequalities and the triangle inequality, together with the
definition of $\eta$, as well as the convention that a sum over the
empty set is zero, ensure that 

\begin{multline}\label{eq:formula}
\left\|x+\lambda(T_{w}(x)-x)+e-z\right\|\\
=\left\|\sum_{i\in I_{z}}w(i)\bigl(x+\lambda(T_{i}(x)-x)+e^{i}-z\bigr)+\sum_{i\notin I_{z}}w(i)\bigl(x+\lambda(T_{i}(x)-x)+e^{i}-z\bigr)\right\| \\
\leq\sum_{i\in I_{z}}w(i)\left\|x+\lambda(T_{i}(x)-x)+e^{i}-z\right\|+\sum_{i\notin I_{z}}w(i)\left\|x+\lambda(T_{i}(x)-x)+e^{i}-z\right\|\\
\leq\left(\sum_{i\in I_{z}}w(i)\right)\eta+\left(1-\sum_{i\in I_{z}}w(i)\right)\|x-z\|\\
=\|x-z\|+\left(\sum_{i\in I_{z}}w(i)\right)(\eta-\|x-z\|)\leq\|x-z\|+\eta\sum_{i\in I_{z}}w(i).
\end{multline}
\end{proof}

\begin{lemma} \label{lem:beta} Let $q\in Q$ be fixed and suppose
that $C\subseteq X$ is a nonempty and compact subset. Denote $I_{C}:=\{i\in I\mid C\cap Q_{i}=\emptyset\}$.
Then there exists $\beta>0$ such that for all $x\in C$, all $\lambda\in[\tau_{1},2-\tau_{2}]$,
all weight functions $w:I\to[0,1]$ and all $e\in E_{0.5}(x,q,\lambda,w)$,
\begin{equation}
\|x+\lambda(T_{w}(x)-x)+e-q\|\leq\|x-q\|-\beta\sum_{i\in I_{C}}w(i).\label{eq:sumI_C}
\end{equation}
\end{lemma}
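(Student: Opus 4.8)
The plan is to establish a uniform strict-decrease estimate for indices in $I_C$ by exploiting the fact that for such indices the cutter $T_i$ cannot have $x$ as a fixed point when $x\in C$, and that this failure is \emph{quantitatively} bounded below by compactness. First I would fix $q\in Q$ (so $q\in Q_i$ for every $i$) and reduce to the single-operator estimate coming from Lemma \textbf{\ref{lem:PerturbedFejer}}. The key observation is that for $i\in I_C$ we have $C\cap Q_i=\emptyset$, so for every $x\in C$ the quantity $\|T_i(x)-x\|$ is strictly positive; by continuity of $T_i$ and the norm, the function $x\mapsto\|T_i(x)-x\|$ is continuous on the compact set $C$, hence attains a positive minimum $\delta_i>0$. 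Setting $\delta:=\min_{i\in I_C}\delta_i>0$ (a finite minimum of positive numbers, using $|I_C|\le m<\infty$) gives a uniform lower bound $\|T_i(x)-x\|\ge\delta$ valid for all $x\in C$ and all $i\in I_C$.

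Next I would convert Lemma \textbf{\ref{lem:PerturbedFejer}} into a quantitative strict decrease. For a single $i\in I_C$ with $e^i\in E_{0.5}(x,q,\lambda,i)$, I would rerun the computation \textbf{(\ref{eq:abs(y-q)<=abs(x-q)})}, but now with $\theta=0.5$ rather than $\theta=1$. Because the error bound \textbf{(\ref{eq:abs(e)})} uses only half the admissible magnitude, the quadratic term $t^2+2\alpha_1 t-\alpha_2$ (in the notation of that proof, with $t=\|e^i\|$) will be bounded above not by $0$ but by a strictly negative quantity controlled by $\alpha_2=\lambda(2-\lambda)\|T_i(x)-x\|^2$. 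Concretely, I expect to obtain
\begin{equation}
\|x+\lambda(T_i(x)-x)+e^i-q\|^2\le\|x-q\|^2-c\,\lambda(2-\lambda)\|T_i(x)-x\|^2
\end{equation}
for an absolute constant $c>0$ (arising from the gap between $\theta=0.5$ and $\theta=1$). Using $\lambda\in[\tau_1,2-\tau_2]$, the factor $\lambda(2-\lambda)$ is bounded below by $\tau_1\tau_2>0$, and combined with $\|T_i(x)-x\|\ge\delta$ this yields a uniform lower bound $\|x-q\|^2-\|x+\lambda(T_i(x)-x)+e^i-q\|^2\ge c\,\tau_1\tau_2\delta^2$. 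To pass from a bound on the \emph{difference of squares} to a bound on the difference of norms, I would note that all the vectors in question lie in a bounded region (since $C$ is compact and the perturbed iterates stay within a controlled ball, e.g.\ via Lemma \textbf{\ref{lem:eta}} or a crude bound), so the norms are uniformly bounded by some $M$; then $\|x-q\|-\|y-q\|\ge(\|x-q\|^2-\|y-q\|^2)/(2M)$ gives a first-order strict decrease $\ge\beta_0:=c\,\tau_1\tau_2\delta^2/(2M)$ for each single index.

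Finally I would assemble the convex-combination estimate exactly as in the proof of Lemma \textbf{\ref{lem:eta}}, splitting the sum $T_w=\sum_i w(i)T_i$ into indices in $I_C$ and indices outside it. For $i\notin I_C$ one still has the non-strict bound $\|x+\lambda(T_i(x)-x)+e^i-q\|\le\|x-q\|$ from Lemma \textbf{\ref{lem:PerturbedFejer}} (since $q\in Q_i$), while for $i\in I_C$ one has the strict decrease by $\beta_0$; the triangle inequality for the convex combination then produces $\|x+\lambda(T_w(x)-x)+e-q\|\le\|x-q\|-\beta_0\sum_{i\in I_C}w(i)$, which is \textbf{(\ref{eq:sumI_C})} with $\beta:=\beta_0$. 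The main obstacle I anticipate is the careful bookkeeping in the quantitative version of \textbf{(\ref{eq:abs(y-q)<=abs(x-q)})}: I must verify that replacing $\theta=1$ by $\theta=0.5$ genuinely produces a strictly negative residual that is a fixed positive fraction of $\alpha_2$, and then that the square-to-norm conversion constant $M$ can be chosen uniformly over all admissible $x$, $\lambda$, $w$ and $e$ — this is where compactness of $C$ together with the uniform boundedness of the admissible perturbations (the same estimate $\|\widetilde e\|\le\mu_i$ used in Lemma \textbf{\ref{lem:eta}}) does the essential work.
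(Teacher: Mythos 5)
Your proof is correct, but it follows a genuinely different route from the paper's. The paper defines, for each $i\in I_{C}$, the quantity $\beta_{i}$ as the \emph{infimum} of the decrease $\|x-q\|-\|x+\lambda(T_{i}(x)-x)+\widetilde{e}-q\|$ over all admissible triples, and proves $\beta_{i}>0$ by a soft compactness argument: it extracts a minimizing sequence, passes to convergent subsequences in $x$, $\lambda$ and $\widetilde{e}$, shows the limiting perturbation still lies in $E_{0.5}$ (hence satisfies the $\theta=1$ bound \emph{strictly}), and invokes the strict-inequality case of Lemma \textbf{\ref{lem:PerturbedFejer}} at the limit point — a step requiring some care because the bound function $h_{i}$ may be discontinuous where its denominator vanishes, which is exactly why the paper needs $C\cap Q_{i}=\emptyset$ there. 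You instead make the decrease \emph{quantitative}: the step you flagged as the main obstacle does check out. In the notation of the proof of Lemma \textbf{\ref{lem:PerturbedFejer}}, with $t^{*}:=\sqrt{\alpha_{1}^{2}+\alpha_{2}}-\alpha_{1}$ and $t\leq t^{*}/2$ (which is what $\theta=0.5$ gives), monotonicity of $t\mapsto t^{2}+2\alpha_{1}t-\alpha_{2}$ on $[0,\infty)$ and the identity $\alpha_{2}=(t^{*})^{2}+2\alpha_{1}t^{*}$ yield
\begin{equation}
t^{2}+2\alpha_{1}t-\alpha_{2}\leq-\tfrac{3}{4}(t^{*})^{2}-\alpha_{1}t^{*}
=-t^{*}\left(\tfrac{3}{4}t^{*}+\alpha_{1}\right)\leq-\tfrac{1}{4}\alpha_{2},
\end{equation}
since $\tfrac{3}{4}t^{*}+\alpha_{1}=\tfrac{3}{4}\sqrt{\alpha_{1}^{2}+\alpha_{2}}+\tfrac{1}{4}\alpha_{1}\geq\tfrac{1}{4}\bigl(\sqrt{\alpha_{1}^{2}+\alpha_{2}}+\alpha_{1}\bigr)$ and $t^{*}=\alpha_{2}/\bigl(\sqrt{\alpha_{1}^{2}+\alpha_{2}}+\alpha_{1}\bigr)$; so your constant is $c=1/4$. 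Your remaining ingredients are sound: $\lambda(2-\lambda)\geq\tau_{1}\tau_{2}$ on $[\tau_{1},2-\tau_{2}]$ (using $\tau_{1}+\tau_{2}\leq2$), the uniform bound $\|T_{i}(x)-x\|\geq\delta>0$ on $C$ for $i\in I_{C}$ by compactness, and the square-to-norm conversion with $M:=\max_{x\in C}\|x-q\|$ (note $\|y-q\|\leq\|x-q\|\leq M$ by the non-strict case of Lemma \textbf{\ref{lem:PerturbedFejer}}, so the divisor $\|x-q\|+\|y-q\|\leq2M$ is uniform, and it is positive since the squared decrease is). The final convex-combination assembly is identical to the paper's. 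What your approach buys is an explicit, computable constant $\beta=\tau_{1}\tau_{2}\delta^{2}/(8M)$ and the avoidance of all subsequence extraction and of the delicate continuity analysis of $h_{i}$; what the paper's approach buys is less computation and a template that reuses the strict case of Lemma \textbf{\ref{lem:PerturbedFejer}} as a black box. The only (cosmetic) omission is the trivial case $I_{C}=\emptyset$, where your $\delta$ is a minimum over the empty set; there the asserted inequality holds with any $\beta>0$ (the paper takes $\beta=1$), so you should state this case separately.
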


\begin{proof} If $I_{C}=\emptyset$ then \textbf{(\ref{eq:sumI_C})}
holds with $\beta=1$ which is positive. Hence, from now on we assume
that $I_{C}\neq\emptyset$. For each $i\in I_{C}$ denote 
\begin{multline}\label{eq:beta_i_def}
\beta_{i}:=\inf\{\|x-q\|-\|x+\lambda(T_{i}(x)-x)+\widetilde{e}-q\|\mid\\
x\in C,\lambda\in[\tau_{1},2-\tau_{2}],\widetilde{e}\in E_{0.5}(x,q,\lambda,i)\},
\end{multline}
and let 
\begin{equation}
\beta:=\min\{\beta_{i}\mid i\in I_{C}\}.
\end{equation}
We show next that $\beta>0$, and that \textbf{(\ref{eq:sumI_C})}
holds regardless whether $I_{C}=\emptyset$ or not.

The definition of $\beta_{i}$ and Lemma \textbf{\ref{lem:PerturbedFejer}}
imply that $\beta_{i}\in[0,\infty)$ for every $i\in I_{C}$. Since
$I$ and hence $I_{C}$ are finite, it suffices to show that $\beta_{i}>0$
for all $i\in I_{C}$ in order to conclude that $\beta>0$. Given
$i\in I_{C}$, the definition of $\beta_{i}$ implies that for each
$\ell\in\mathbb{N}$ there exists a triplet $(x_{\ell,i},\lambda_{\ell,i},\widetilde{e}_{\ell,i})\in C\times[\tau_{1},2-\tau_{2}]\times E_{0.5}(x_{\ell,i},q,\lambda_{\ell,i},i)$
such that 
\begin{equation}
\beta_{i}\leq\|x_{\ell,i}-q\|-\|x_{\ell,i}+\lambda_{\ell,i}(T_{i}(x_{\ell,i})-x_{\ell,i})+\widetilde{e}_{\ell,i}-q\|<\beta_{i}+\frac{1}{\ell}.\label{eq:beta_i_sandwitch}
\end{equation}

Because of the compactness of $C\times[\tau_{1},2-\tau_{2}]$, there
exists an infinite set $N_{1}$ of natural numbers, and a pair $(x(i),\lambda(i))\in C\times[\tau_{1},2-\tau_{2}]$,
such that

\begin{equation}
(x(i),\lambda(i))=\lim_{\ell\to\infty,\ell\in N_{1}}(x_{\ell,i},\lambda_{\ell,i}).
\end{equation}

Let

\begin{equation}
\Lambda_{i}:=\inf\{\|\widetilde{e}_{\ell,i}\|\mid\ell\in N_{1}\}
\end{equation}

{\noindent} and let $N_{2}$ be an infinite subset of $N_{1}$ such that the subsequence
$(\|\widetilde{e}_{\ell,i}\|)_{\ell\in N_{2}}$ converges to $\Lambda_{i}$.

We claim that the subsequence $(\widetilde{e}_{\ell,i})_{\ell\in N_{2}}$
has a convergent subsequence. Indeed, since $C$ is a compact set
and since for all $i\in I$ the real function $f_{i}(x):=\|T_{i}(x)-x\|$
is continuous on $C$ as a result of the continuity of the norm and
the assumption on $T_{i}$, it follows from the Weierstrass Theorem 
that $f_{i}$ is bounded from above on $C$. Let $\mu_{i}>0$ be an
arbitrary upper bound on $f_{i}$ over $C$.

Elementary algebra shows that any $\widetilde{e}$ which satisfies
\textbf{(\ref{eq:abs(e)})} (with $\theta:=0.5$ and with $\widetilde{e}$
instead of $e$) also satisfies $\|\widetilde{e}\|\leq f_{i}(x)$,
and hence $\|\widetilde{e}\|\leq\mu_{i}$ whenever $(x,\lambda,\widetilde{e})\in C\times[\tau_{1},2-\tau_{2}]\times E_{0.5}(x,q,\lambda,i)$.

Since $\widetilde{e}_{\ell,i}\in E_{0.5}(x_{\ell,i},q,\lambda_{\ell,i},i)$
for all $\ell\in\mathbb{N}$, it follows that $\|\widetilde{e}_{\ell,i}\|\leq\mu_{i}$
for all $\ell\in\mathbb{N}$ and, in particular, for all $\ell\in N_{2}$.
Because the ball $B[0,\mu_{i}]$ is compact and the sequence $(\widetilde{e}_{\ell,i})_{\ell\in N_{2}}$
is contained in this ball, it indeed has a convergent subsequence
which converges to some vector $\widetilde{e}(i)$ which belongs to
this ball, namely, there exists an infinite subset $N_{3}$ of $N_{2}$
such that $\lim_{\ell\to\infty,\ell\in N_{3}}\widetilde{e}_{\ell,i}=\widetilde{e}(i)$.

Now, when we combine this fact, together with the continuity of the
norm, the fact that $N_{3}\subseteq N_{2}$ and the definition of
$\Lambda_{i}$, we obtain

\begin{equation}
\Lambda_{i}=\lim_{\ell\to\infty,\ell\in N_{3}}\|\widetilde{e}_{\ell,i}\|=\|\widetilde{e}(i)\|.
\end{equation}

In addition, denote by $h_{i}(x,\lambda)$ the function on the right-hand
side of \textbf{(\ref{eq:abs(e)})}, with $\theta:=1/2$ and $h_{i}$
defined on $C\times[\tau_{1},2-\tau_{2}]$. Then $\|\widetilde{e}_{\ell,i}\|\leq h_{i}(x_{\ell,i},\lambda_{\ell,i})$
for all $\ell\in\mathbb{N}$, and, in particular, for all $\ell\in N_{3}$,
because $\widetilde{e}_{\ell,i}\in E_{0.5}(x_{\ell,i},q,\lambda_{\ell,i},i)$
for all $\ell\in\mathbb{N}$.

We recall that the right-hand side of \textbf{(\ref{eq:abs(e)})} vanishes
if its denominator, and hence its numerator, vanish. Thus, it is not
clear that $h_{i}$ is continuous at $(x,\lambda)$ for which the
denominator in the definition of $h_{i}(x,\lambda)$ vanishes, but
from the continuity of the norm and of $T_{i}$ it is clear that $h_{i}$
is continuous at $(x,\lambda)$ whenever the above-mentioned denominator
does not vanish. Anyway, since the definition of $I_{C}$ ensures
that $C\cap Q_{i}=\emptyset$ for all $i\in I_{C}$, and therefore
$x(i)\notin Q_{i}$, namely, $x(i)\neq T_{i}(x(i))$ for all $i\in I_{C}$,
it follows that for all $i\in I_{C}$ the denominator in the definition
of $h_{i}(x,\lambda)$ does not vanish at $(x(i),\lambda(i))$; hence
$h_{i}$ is continuous at $(x(i),\lambda(i))$ for all $i\in I_{C}$.
This fact and the limit $(x(i),\lambda(i))=\lim_{\ell\to\infty,n\in N_{3}}(x_{\ell,i},\lambda_{\ell,i})$,
yield 
\begin{equation}
\|\widetilde{e}(i)\|=\lim_{\ell\to\infty,\ell\in N_{3}}\|\widetilde{e}_{\ell,i}\|\leq\lim_{\ell\to\infty,\ell\in N_{3}}h_{i}(x_{\ell,i},\lambda_{\ell,i})=h_{i}(x(i),\lambda(i)).\label{eq:yield}
\end{equation}
Hence, $\widetilde{e}(i)\in E_{0.5}(x(i),q,\lambda(i),i)$. Thus,
if we let $\theta:=1$ in \textbf{(\ref{eq:abs(e)})}, we see that $\widetilde{e}$
satisfies \textbf{(\ref{eq:abs(e)})} with strict inequality, where in
\textbf{(\ref{eq:abs(e)})} we let $T:=T_{i}$, $\lambda:=\lambda(i)$,
$x:=x(i)$ and $\widetilde{e}(i)$ instead of $e$. Since $0<\tau_{1}\leq\lambda(i)\leq2-\tau_{2}<2$
and $x(i)\neq T_{i}(x(i))$, and since $q\in Q\subseteq Q_{i}$, we
conclude from Lemma \textbf{\ref{lem:PerturbedFejer}} (in which $T:=T_{i}$,
$S:=Q_{i}$, $\widetilde{e}(i)$ is instead of $e$, $\lambda:=\lambda(i)$,
$x:=x(i)$), from \textbf{(\ref{eq:beta_i_sandwitch})}, and from
the continuity of $T_{i}$ and the norm, that for each $i\in I_{C}$,
\begin{multline}
\beta_{i}=\lim_{\ell\to\infty,\ell\in N_{3}}\Big[\|x_{\ell,i}-q\|-\|x_{\ell,i}+\lambda_{\ell,i}(T_{i}(x_{\ell,i})-x_{\ell,i})+\widetilde{e}_{\ell,i}-q\|\Big]\\
=\|x(i)-q\|-\|x(i)+\lambda(i)(T_{i}(x(i))-x(i))+\widetilde{e}(i)-q\|>0.\label{eq:beta-i}
\end{multline}
Finally, since $\beta=\min\{\beta_{i},i\in I_{C}\}$ and since $I$,
and hence $I_{C}$, are finite, it follows that $\beta=\beta_{j}$
for some $j\in I_{C}$, and so indeed $\beta>0$ also in the case
where $I_{C}\neq\emptyset$, as claimed.

The definition of $\beta$ implies that

\begin{equation}
\|x+\lambda(T_{i}(x)-x)+\widetilde{e}-q\|\leq\|x-q\|-\beta
\end{equation}

for all $i\in I_{C}$ and all $(x,\lambda,\widetilde{e})\in C\times[\tau_{1},2-\tau_{2}]\times E_{0.5}(x,q,\lambda,i)$,
with an empty inequality when $I_{C}=\emptyset$. In addition, since
$q\in Q\subseteq Q_{i}$ for each $i\in I$, we infer from Lemma \textbf{\ref{lem:PerturbedFejer}}
that

\begin{equation}
\|x+\lambda(T_{i}(x)-x)+\widetilde{e}-q\|\leq\|x-q\|
\end{equation}

for every $i\in I$ and all $(x,\lambda,\widetilde{e})\in C\times[\tau_{1},2-\tau_{2}]\times E_{0.5}(x,q,\lambda,i)$,
and, in particular, for every $i\notin I_{C}$ (with an empty inequality
if $I\backslash I_{C}=\emptyset$).

It follows from these inequalities and the triangle inequality, together
with the convention that the sum over the empty set is zero, that
for all $x\in C$, all $\lambda\in[\tau_{1},2-\tau_{2}]$, all weight
functions $w:I\to[0,1]$ and all $e\in E_{0.5}(x,q,\lambda,w)$, 
\begin{multline}
\|x+\lambda(T_{w}(x)-x)+e-q\|=\left\| \sum_{i\in I}w(i)\big(x+\lambda(T_{i}(x)-x)+e^{i}-q\big)\right\| \\
=\left\| \sum_{i\in I_{C}}w(i)\left(x+\lambda(T_{i}(x)-x)+e^{i}-q\right)+\sum_{i\notin I_{C}}w(i)\left(x+\lambda(T_{i}(x)-x)+e^{i}-q\right)\right\| \\
\leq\left(\sum_{i\in I_{C}}w(i)\right)(\|x-q\|-\beta)+\left(1-\sum_{i\in I_{C}}w(i)\right)\|x-q\|\\
=\|x-q\|-\beta\sum_{i\in I_{C}}w(i).\label{eq:final}
\end{multline}
Consequently, we established \textbf{(\ref{eq:sumI_C})}, as required.
\end{proof}

The next lemma establishes the Fej\'er monotonicity of sequences generated
by Algorithm \textbf{\ref{alg:GBIP}.}

\begin{lemma} \label{lem:Fejer-kQ} For each $q\in Q\cap B[x^{0},2\sigma]$,
$k\in\mathbb{N}\cup\{0\}$ and each $e^{k}$ which satisfies \textbf{(\ref{eq:abs(eki)})},
one has 
\begin{equation}
\|x^{k+1}-q\|\leq\|x^{k}-q\|,\label{eq:Fejer-kQ}
\end{equation}
and this inequality is strict if there exists some $i\in I$ such
that both $x^{k}\notin Q_{i}$ and $w_{k}(i)>0$. 
\end{lemma}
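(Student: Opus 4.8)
The plan is to prove both assertions simultaneously by induction on $k$, carrying as part of the inductive hypothesis the auxiliary bound $\|x^{k}-q\|\leq 2\sigma$ alongside the Fej\'er inequality \textbf{(\ref{eq:Fejer-kQ})}. This auxiliary bound is precisely what converts the algorithm's perturbation constraint \textbf{(\ref{eq:abs(eki)})}, which is phrased in terms of the fixed quantity $2\sigma$, into the sharper membership $e^{k,i}\in E_{0.5}(x^{k},q,\lambda_{k},i)$, phrased in terms of $\|x^{k}-q\|$. The base case is immediate, since $q\in B[x^{0},2\sigma]$ gives $\|x^{0}-q\|\leq 2\sigma$.

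For the inductive step, assume $\|x^{k}-q\|\leq 2\sigma$. I would first compare the two perturbation bounds. Because $\|x^{k}-q\|\leq 2\sigma$, replacing $\|x^{k}-q\|$ by the larger value $2\sigma$ can only enlarge both the quantity $\zeta$ of \textbf{(\ref{eq:abs(e)})} (turning it into $\zeta_{k,i}$) and the additive term in the denominator; since the numerators coincide (both carry the factor $1/2$, i.e. $\theta:=0.5$), the right-hand side of \textbf{(\ref{eq:abs(eki)})} is at most the right-hand side of \textbf{(\ref{eq:abs(e)})} taken with $\theta:=0.5$, $x:=x^{k}$, $\lambda:=\lambda_{k}$ and $T:=T_{i}$. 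The degenerate case $T_{i}(x^{k})=x^{k}$ is covered by the convention that both sides vanish, forcing $e^{k,i}=0$. Consequently each $e^{k,i}$ lies in $E_{0.5}(x^{k},q,\lambda_{k},i)\subseteq E_{1}(x^{k},q,\lambda_{k},i)$, and hence $e^{k}\in E_{0.5}(x^{k},q,\lambda_{k},w_{k})$.

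Next I would decompose $x^{k+1}-q$ as a convex combination. Using $T_{w_{k}}(x^{k})=\sum_{i\in I}w_{k}(i)T_{i}(x^{k})$, the form $e^{k}=\sum_{i\in I}w_{k}(i)e^{k,i}$ and $\sum_{i\in I}w_{k}(i)=1$, the iterate \textbf{(\ref{eq:GBIP})} yields
\[
x^{k+1}-q=\sum_{i\in I}w_{k}(i)\big(x^{k}+\lambda_{k}(T_{i}(x^{k})-x^{k})+e^{k,i}-q\big).
\]
Since $q\in Q\subseteq Q_{i}=\textnormal{Fix}(T_{i})$, each $T_{i}$ is a separator of $Q_{i}\ni q$ and $e^{k,i}$ satisfies \textbf{(\ref{eq:abs(e)})} with $\theta=1$, so Lemma \textbf{\ref{lem:PerturbedFejer}} bounds each summand's norm by $\|x^{k}-q\|$. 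The triangle inequality together with $\sum_{i\in I}w_{k}(i)=1$ then gives $\|x^{k+1}-q\|\leq\|x^{k}-q\|\leq 2\sigma$, which simultaneously proves \textbf{(\ref{eq:Fejer-kQ})} and restores the auxiliary bound, closing the induction. For the strict inequality, suppose some $i_{0}\in I$ satisfies $x^{k}\neq T_{i_{0}}(x^{k})$ and $w_{k}(i_{0})>0$. Because $0<\tau_{1}\leq\lambda_{k}\leq 2-\tau_{2}<2$, the right-hand side of \textbf{(\ref{eq:abs(e)})} with $\theta=1$ is strictly positive while $\|e^{k,i_{0}}\|$ is bounded by half of it; hence $e^{k,i_{0}}$ satisfies \textbf{(\ref{eq:abs(e)})} with strict inequality, and the strict part of Lemma \textbf{\ref{lem:PerturbedFejer}} makes the $i_{0}$-summand strictly smaller than $w_{k}(i_{0})\|x^{k}-q\|$, so $w_{k}(i_{0})>0$ propagates the strictness through the convex combination.

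The main obstacle I anticipate is the interlock in the first two paragraphs: the membership $e^{k,i}\in E_{0.5}(x^{k},q,\lambda_{k},i)$ that is needed to invoke Lemma \textbf{\ref{lem:PerturbedFejer}} is available only once $\|x^{k}-q\|\leq 2\sigma$ is known, yet that very bound is a consequence of the Fej\'er inequality being proved. Threading $\|x^{k}-q\|\leq 2\sigma$ through the inductive hypothesis, combined with the careful monotonicity comparison of the two denominators (and the degenerate vanishing case), is the crux; the rest is the routine convexity estimate.
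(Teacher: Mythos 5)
Your proof is correct and follows essentially the same route as the paper's own: induction on $k$ in which the Fej\'er inequality supplies the bound $\|x^{k}-q\|\leq 2\sigma$, which converts the algorithmic error condition \textbf{(\ref{eq:abs(eki)})} into the condition \textbf{(\ref{eq:abs(e)})} with $\theta:=1/2$ (hence $\theta:=1$), followed by the convex-combination decomposition of $x^{k+1}-q$, an application of Lemma \textbf{\ref{lem:PerturbedFejer}} to each summand, the triangle inequality, and the strict part of that lemma for the strictness claim. Your only departures are expository: you make explicit the denominator-monotonicity comparison between \textbf{(\ref{eq:abs(eki)})} and \textbf{(\ref{eq:abs(e)})}, and the half-versus-full-bound argument that yields the strict inequality, both of which the paper states without elaboration.
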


\begin{proof} We prove the assertion using induction on $k$. First
observe that by the choice of $\sigma$ in Algorithm \textbf{\ref{alg:GBIP}},
one has $d(x^{0},Q)<\sigma$. Therefore, there is some $\widetilde{q}\in Q$
such that $\|x^{0}-\widetilde{q}\|<\sigma$. Hence, $Q\cap B[x^{0},\sigma]\neq\emptyset$
and, since $B[x^{0},\sigma]\subseteq B[x^{0},2\sigma]$, also $Q\cap B[x^{0},2\sigma]\neq\emptyset$.

Now let $q\in Q\cap B[x^{0},2\sigma]$ be arbitrary. Suppose that
$k=0$. Since $\|x^{0}-q\|\leq2\sigma$ and $e^{0,i}$ satisfies \textbf{(\ref{eq:abs(eki)})}
for each $i\in I$, it follows that $e^{0,i}$ satisfies \textbf{(\ref{eq:abs(e)})}
(with $e^{0,i}$ instead of $e$ and with $\theta:=1/2$, and hence also with $\theta:=1$) for each $i\in I$.

This fact, the assumption that $q\in Q\subseteq Q_{i}$ for each $i\in I$,
and the notation $y^{0,i}:=x^{0}+\lambda_{0}(T_{i}(x^{0})-x^{0})+e^{0,i}$
imply, using Lemma \textbf{\ref{lem:PerturbedFejer}} (in which $x:=x^{0}$,
$S:=Q_{i}$, $T:=T_{i}$, $\lambda:=\lambda_{0}$, $e:=e^{0,i}$),
that 
\begin{equation}\label{eq:Fejer-i0}
\|y^{0,i}-q\|\leq\|x^{0}-q\|.
\end{equation}
Since $0<\tau_{1}\leq\lambda_{0}$ and $2-\lambda_{0}\geq\tau_{2}>0$,
if $x^{0}\notin Q_{i}$ then \textbf{(\ref{eq:Fejer-i0})} is strict,
again from Lemma \textbf{\ref{lem:PerturbedFejer}}. These considerations,
\textbf{(\ref{eq:GBIP})} and the triangle inequality imply that 
\begin{multline}\label{eq:fejer}
\|x^{1}-q\|=\|x^{0}+\lambda_{0}(T_{w_{0}}(x^{0})-x^{0})+e^{0}-q\|\\
=\left\| \sum_{i\in I}w_{0}(i)\left(x^{0}+\lambda_{0}(T_{i}(x^{0})-x^{0})+e^{0,i}-q\right)\right\| =\left\| \sum_{i\in I}w_{0}(i)\left(y^{0,i}-q\right)\right\| \\
\leq\sum_{i\in I}w_{0}(i)\left\| y^{0,i}-q\right\| \leq\sum_{i\in I}w_{0}(i)\|x^{0}-q\|=\|x^{0}-q\|,
\end{multline}
and this inequality is strict if there exists some $i\in I$ such
that $x^{0}\notin Q_{i}$ and $w_{0}(i)>0$. In other words, \textbf{(\ref{eq:Fejer-kQ})}
holds true for the case $k=0$.

Suppose now that the assertion holds for all nonnegative integers
up to $k\in\mathbb{N}\cup\{0\}$. We want to show that it holds for
$k+1$ as well. The induction hypothesis implies that $\|x^{k}-q\|\leq\ldots\leq\|x^{1}-q\|\leq\|x^{0}-q\|\leq2\sigma$.
Since $e^{k,i}$ satisfies \textbf{(\ref{eq:abs(eki)})} for each $i\in I$,
it follows that $e^{k,i}$ satisfies \textbf{(\ref{eq:abs(e)})} (with
$e^{k,i}$ instead of $e$ and with $\theta:=1/2$, and hence also
with $\theta:=1$) for each $i\in I$. This fact, the assumption that
$q\in Q\subseteq Q_{i}$ for each $i\in I$, and the notation $y^{k,i}:=x^{k}+\lambda_{k}(T_{i}(x^{k})-x^{k})+e^{k,i}$
imply, using Lemma \textbf{\ref{lem:PerturbedFejer}} (in which $x:=x^{k}$,
$S:=Q_{i}$, $T:=T_{i}$, $\lambda:=\lambda_{k}$, $e:=e^{k,i}$),
that 
\begin{equation}\label{eq:Fejer-ik}
\|y^{k,i}-q\|\leq\|x^{k}-q\|.
\end{equation}
Since $0<\tau_{1}\leq\lambda_{k}$ and $2-\lambda_{k}\geq\tau_{2}>0$,
if $x^{k}\notin Q_{i}$ then \textbf{(\ref{eq:Fejer-ik})} is strict,
again from Lemma \textbf{\ref{lem:PerturbedFejer}}. These considerations,
\textbf{(\ref{eq:GBIP})} and the triangle inequality imply that 
\begin{multline}
\|x^{k+1}-q\|=\|x^{k}+\lambda_{k}(T_{w_{k}}(x^{k})-x^{k})+e^{k}-q\|\\
=\left\| \sum_{i\in I}w_{k}(i)\left(x^{k}+\lambda_{k}(T_{i}(x^{k})-x^{k})+e^{k,i}-q\right)\right\| =\left\| \sum_{i\in I}w_{k}(i)\left(y^{k,i}-q\right)\right\| \\
\leq\sum_{i\in I}w_{k}(i)\left\| y^{k,i}-q\right\| \leq\sum_{i\in I}w_{k}(i)\|x^{k}-q\|=\|x^{k}-q\|,\label{eq:fejer2}
\end{multline}
and this inequality is strict if there exists some $i\in I$ such
that both $x^{k}\notin Q_{i}$ and $w_{k}(i)>0$. In other words,
\textbf{(\ref{eq:Fejer-kQ})} holds true also for $k+1$, as required.
\end{proof}\vspace*{0.3cm}

\begin{proof}[{\bf Proof of Theorem \ref{thm:GBIP} }] 
The proof is divided into several steps. In the following steps we fix
some $q\in Q\cap B[x^{0},\sigma]$. This is possible since $Q\cap B[x^{0},\sigma]\neq\emptyset$
by the assumption that $d(x^{0},Q)<\sigma$. \\\vspace*{0.5cm}

\noindent {\bf Step 1: $(x^{k})_{k=0}^{\infty}$ has a convergent
subsequence}: Indeed, Lemma \textbf{\ref{lem:Fejer-kQ}} ensures
that $\|x^{k+1}-q\|\leq\|x^{k}-q\|\leq\ldots\leq\|x^{0}-q\|$ for
all $k\in\mathbb{N}\cup\{0\}$. Hence, $x^{k}$ is in the compact
ball $B[q,\|x^{0}-q\|]$ for all $k\in\mathbb{N}\cup\{0\}$, so that
$(x^{k})_{k=0}^{\infty}$ must have a convergent subsequence. \\\vspace*{0.5cm}

\noindent  {\textbf{Step 2: $(x^{k})_{k=0}^{\infty}$ has at most one accumulation
point}}: Step 1 ensures that $(x^{k})_{k=0}^{\infty}$ has at least
one accumulation point. Assume to the contrary that it has two different
accumulation points $u$ and $u^{\prime}$. Then $\delta:=\|u-u^{\prime}\|>0$.
As explained in Step 1 above, the sequence $(\|x^{k}-q\|)_{k=0}^{\infty}$
is decreasing, and it is bounded from below by 0. Thus, $\lim_{k\to\infty}\|x^{k}-q\|$
exists. Since $u$ is an accumulation point of $(x^{k})_{k=0}^{\infty}$,
it follows from the norm continuity that $\lim_{k\to\infty}\|x^{k}-q\|=\|u-q\|$ and also that, for all $k\in\mathbb{N}\cup\{0\}$, 
\begin{equation}
\|u-q\|\leq\|x^{k}-q\|.\label{eq:abs(u-q)<=abs(xk-q)}
\end{equation}

\vspace{0.2cm}

\noindent {\textbf{Step 2.1: $u\in Q\cap B[x^{0},2\sigma]$}}: We first
show that $u\in B[x^{0},2\sigma]$. Indeed, \textbf{(\ref{eq:abs(u-q)<=abs(xk-q)})}
implies that $\|u-q\|\leq\|x^{0}-q\|$, and since $\|x^{0}-q\|\leq\sigma$
by the choice of $q$, we have

\begin{equation}
\|u-x^{0}\|\leq\|u-q\|+\|q-x^{0}\|\leq2\|x^{0}-q\|\leq2\sigma.
\end{equation}

Hence, $u\in B[x^{0},2\sigma]$.

Now we show that $u\in Q$. Assume to the contrary that $u\notin Q$.
Then $I_{u}:=\{i\in I\mid u\notin Q_{i}\}$ is nonempty and is finite
since $I$ is finite. Since $Q_{i}$ is closed for each $i\in I$,
it follows that $d(u,Q_{i})>0$ for every $i\in I_{u}$. Let $\rho$
be any positive number which satisfies $\rho<\min\{\delta/2,d(u,Q_{i})\mid i\in I_{u}\}$.
Let $C$ be the closed ball with radius $\rho$ and center $u$. The
choice of $\rho$ implies that $C\cap Q_{i}=\emptyset$ for all $i\in I_{u}$.
Let $I_{C}:=\{i\in I\mid C\cap Q_{i}=\emptyset\}$.

From the previous line $I_{u}\subseteq I_{C}$. On the other hand,
it must be that $I_{C}\subseteq I_{u}$ since if $i\in I_{C}\backslash I_{u}$,
then both $u\in Q_{i}$ (since $i\notin I_{u}$) and $u\notin Q_{i}$
(since $u\in C$ and $C$ is disjoint to $Q_{i}$ according to the
definition of $I_{C}$), a contradiction. Hence, $I_{C}=I_{u}$.

Now let $\eta$ be as in Lemma \textbf{\ref{lem:eta}}, where there
$z:=u$, and let $G$ be the closed ball of radius $\|x^{0}-q\|$
around $q$ (or the closed ball with radius $\|x^{0}-q\|+\|q\|$ around
the origin). Let $\beta$ be as in Lemma \textbf{\ref{lem:beta}}.
Define 
\begin{equation}
\varepsilon:=\frac{\rho\beta}{\eta+\beta}.\label{eq:epsilon}
\end{equation}
Since $\varepsilon>0$ and $u$ is an accumulation point of $(x^{k})_{k=0}^{\infty}$,
there exists an index $k\in\mathbb{N}\cup\{0\}$ sufficiently large
such that 
\begin{equation}
\|x^{k}-u\|<\varepsilon.\label{abs(xk-u)<epsilon}
\end{equation}
Since $\varepsilon<\rho$, it follows that $x^{k}\in C$. Since $u^{\prime}$
is also an accumulation point and $\rho<(1/2)\delta=(1/2)\|u-u^{\prime}\|$, the set of all $\widetilde{k}\in\mathbb{N}\cup\{0\}$ such that $\widetilde{k}>k$ 
and $x^{\widetilde{k}}\notin C$ is nonempty (in fact, infinite).
Let $k^{\prime}$ be the smallest element in this set. Then $k^{\prime}>k$,
and any $\widetilde{k}\in[k,k^{\prime}-1]\cap\left(\mathbb{N}\cup\{0\}\right)$
has the property that $x^{\widetilde{k}}\in C$. Hence, we can apply
Lemma \textbf{\ref{lem:beta}} repeatedly with $x:=x^{\widetilde{k}}$
where $\widetilde{k}\in[k,k^{\prime}-1]\cap\left(\mathbb{N}\cup\{0\}\right)$,
and by using \textbf{(\ref{eq:GBIP})}, \textbf{(\ref{eq:abs(u-q)<=abs(xk-q)})},
\textbf{(\ref{abs(xk-u)<epsilon})}, the triangle inequality and the
choice of $k$, it follows that 
\begin{multline}
\|u-q\|\leq\|x^{k^{\prime}}-q\|=\|x^{k^{\prime}-1}+\lambda_{k^{\prime}-1}(T_{w_{k^{\prime}-1}}(x^{k^{\prime}-1})-x^{k^{\prime}-1})+e^{k^{\prime}-1}-q\|\\
\leq\|x^{k^{\prime}-1}-q\|-\beta\sum_{i\in I_{C}}w_{k^{\prime}-1}(i)\leq\cdots\leq\\
\leq\|x^{k}-q\|-\beta\sum_{t=k}^{k^{\prime}-1}\sum_{i\in I_{C}}w_{t}(i)\leq\|x^{k}-u\|+\|u-q\|-\beta\sum_{t=k}^{k^{\prime}-1}\sum_{i\in I_{C}}w_{t}(i)\\
<\varepsilon+\|u-q\|-\beta\sum_{t=k}^{k^{\prime}-1}\sum_{i\in I_{C}}w_{t}(i).\label{eq:step2.1}
\end{multline}
As a result, 
\begin{equation}
\sum_{t=k}^{k^{\prime}-1}\sum_{i\in I_{C}}w_{t}(i)<\frac{\varepsilon}{\beta}.\label{epsilon/beta}
\end{equation}
In addition, according to the proof of Step 1 and the definition of
the ball $G$ (near \textbf{(\ref{eq:epsilon})}), we have $x^{t}\in G$
for all $t\in\mathbb{N}\cup\{0\}$; in particular, $x^{t}\in G$ for
all $t\in\{k,k+1,\ldots,k^{\prime}-1\}$. Thus, we can apply Lemma
\textbf{\ref{lem:eta}} repeatedly with $w:=w_{t}$, $x:=x^{t}$,
$\lambda:=\lambda_{t}$, $t\in\{k,k+1,\ldots,k^{\prime}-1\}$ and
$z:=u$. By using \textbf{(\ref{eq:GBIP})}, \textbf{(\ref{eq:epsilon})},
\textbf{(\ref{abs(xk-u)<epsilon})}, \textbf{(\ref{epsilon/beta})}
and the equality $I_{C}=I_{u}$, it follows that 
\begin{equation}
\|x^{k^{\prime}}-u\|\leq\|x^{k}-u\|+\eta\sum_{t=k}^{k^{\prime}-1}\sum_{i\in I_{C}}w_{t}(i)<\varepsilon+\eta\sum_{t=k}^{k^{\prime}-1}\sum_{i\in I_{C}}w_{t}(i)\leq\varepsilon+\eta\frac{\varepsilon}{\beta}=\rho.
\end{equation}
Hence $x^{k^{\prime}}\in C$, a contradiction to the choice of $k^{\prime}$.
This contradiction shows that our previous assumption that $u\notin Q$
is invalid. Therefore, $u\in Q$.

\vspace*{0.5cm}

\noindent {\textbf{Step 2.2: $(x^{k})_{k=0}^{\infty}$ converges to $u$,
a contradiction}}: So far we have shown that $u\in Q\cap B[x^{0},2\sigma]$
under the assumption that the sequence $(x^{k})_{k=0}^{\infty}$ has
at least two distinct accumulation points $u$ and $u^{\prime}$.
As a result, we can use Lemma \textbf{\ref{lem:Fejer-kQ}} (where
the $q$ there is replaced by $u$) to conclude that $(\|x^{k}-u\|)_{k=0}^{\infty}$
is a decreasing sequence. Since $u$ is an accumulation point of $(x^{k})_{k=0}^{\infty}$,
it follows that $(\|x^{k}-u\|)_{k=0}^{\infty}$ has a subsequence
which converges to 0, and we conclude that $\lim_{k\to\infty}\|x^{k}-u\|=0$,
namely, $(x^{k})_{k=0}^{\infty}$ converges to $u$. This is a contradiction
to the assumption that $(x^{k})_{k=0}^{\infty}$ has two distinct
accumulation points. 

\vspace*{0.5cm}

\noindent {\textbf{Step 2.3: $(x^{k})_{k=0}^{\infty}$ converges:}} The
previous step shows that the assumption that $(x^{k})_{k=0}^{\infty}$
has more than one accumulation point is invalid. Since, according
to Step 1, $(x^{k})_{k=0}^{\infty}$ has at least one accumulation
point, it follows that this sequence has exactly one accumulation
point, namely it converges. Denote by $x^{\infty}$ its limit. \vspace*{0.5cm}

\noindent {\bf Step 3: $x^{\infty}\in Q\cap B[x^{0},2\sigma]$:}
Indeed, from Lemma ~\textbf{\ref{lem:Fejer-kQ}}, the triangle inequality
and the choice of $q$ it follows that for all $k\in\mathbb{N}\cup\{0\}$,
we have $\|x^{\infty}-x^{0}\|\leq\|x^{\infty}-q\|+\|q-x^{0}\|\leq\|x^{\infty}-x^{k}\|+\|x^{k}-q\|+\sigma\leq\|x^{\infty}-x^{k}\|+\|x^{0}-q\|+\sigma\leq\|x^{\infty}-x^{k}\|+2\sigma$.
Since $\lim_{k\to\infty}\|x^{\infty}-x^{k}\|=0$, by letting $k\to\infty$
in the previous inequality, we have $\|x^{\infty}-x^{0}\|\leq2\sigma$,
namely $x^{\infty}\in B[x^{0},2\sigma]$.

Now, if $\widehat{I}=\emptyset$, then $\widehat{Q}=X$ and, therefore,
$x^{\infty}\in\widehat{Q}$. Suppose now that $\widehat{I}\neq\emptyset$
and assume to the contrary that $x^{\infty}\notin\widehat{Q}$. Before
going further with the proof, it is noteworthy to say that we cannot
use the conclusion of Step 2.1 (with $x^{\infty}$ instead of $u$)
since this step was based on the false assumption that $(x^{k})_{k=0}^{\infty}$
has at least two different accumulation points.

Returning to our goal, the definition of $\widehat{Q}$ and the assumption
that $x^{\infty}\notin\widehat{Q}$ imply that there exists an index
$j\in\widehat{I}$ such that $x^{\infty}\notin Q_{j}$. Therefore,
$d(x^{\infty},Q_{j})>0$. As a result, if we denote by $B^{\infty}$
the closed ball of radius $(1/2)d(x^{\infty},Q_{j})$ and center $x^{\infty}$,
and denote $C:=B^{\infty}$ and $I_{C}:=\{i\in I\mid Q_i\cap C=\emptyset\}$,
then we have $j\in I_{C}$.

Since $(x^{k})_{k=0}^{\infty}$ converges to $x^{\infty}$, there
exists an index $\widetilde{k}\in\mathbb{N}\cup\{0\}$ such that $x^{k}\in C$
for all integers $k\geq\widetilde{k}$. Consequently, by fixing some
$k>\widetilde{k}$ and applying Lemma \textbf{\ref{lem:beta}} repeatedly
with $x:=x^{t}$, $\lambda:=\lambda_{t}$, $w:=w_{t}$ and $e:=e^{t}$,
$t\in[\widetilde{k},k-1]\cap(\mathbb{N}\cup\{0\})$, we conclude that
for all $k>\widetilde{k}$, 
\begin{equation}
\|x^{k}-q\|\leq\|x^{k-1}-q\|-\beta\sum_{i\in I_{C}}w_{k-1}(i)\leq\|x^{\widetilde{k}}-q\|-\beta\sum_{t=\widetilde{k}}^{k-1}\sum_{i\in I_{C}}w_{t}(i),\label{eq:abs(xk-q)<=abs(xk_tilde-q)}
\end{equation}
where $\beta>0$ is the number from Lemma \textbf{\ref{lem:beta}}
with respect to the set $C$. Since $j\in I_{C}$, we have $\sum_{t=\widetilde{k}}^{k-1}w_{t}(j)\leq\sum_{t=\widetilde{k}}^{k-1}\sum_{i\in I_{C}}w_{t}(i)$.
As a result of this inequality and inequality \textbf{(\ref{eq:abs(xk-q)<=abs(xk_tilde-q)})},
we obtain that for all $k>\widetilde{k}$, 
\begin{equation}\label{eq:end}
\sum_{t=\widetilde{k}}^{k-1}w_{t}(j)\leq\sum_{t=\widetilde{k}}^{k-1}\sum_{i\in I_{C}}w_{k-1}(i)\leq\frac{1}{\beta}(\|x^{\widetilde{k}}-q\|-\|x^{k}-q\|)\leq\frac{\|x^{\widetilde{k}}-q\|}{\beta}.
\end{equation}
By letting $k\to\infty$ we conclude that $\sum_{t=\widetilde{k}}^{\infty}w_{t}(j)\leq(1/\beta)\|x^{\widetilde{k}}-q\|<\infty$.
This is a contradiction since $j\in\widehat{I}$ and hence $\sum_{t=\widetilde{k}}^{\infty}w_{t}(j)=\infty$.
Therefore, the assumption $x^{\infty}\notin\widehat{Q}$ cannot hold,
namely, $x^{\infty}\in\widehat{Q}$, as required.
\end{proof}

\begin{remark}\label{rem:Q=00003DQhat} A simple condition which
ensures that a sequence $(x^{k})_{k=0}^{\infty}$ generated by Algorithm
\textbf{\ref{alg:GBIP}} converges to a point located in the common
fixed point set $Q$ is that $Q=\widehat{Q}$; this condition holds
if $I=\widehat{I}$. In other words, we simply need to make sure,
in advance, that $\sum_{k=0}^{\infty}w_{k}(i)=\infty$ for each index
$i\in I$.

This is a rather mild condition. Indeed, it holds in the case of Example
\textbf{\ref{ex:FullySequential}} when the control is repetitive.
It also holds in the case of Example \textbf{\ref{ex:FullySimultaneous}}
when all the weights are equal to $1/m$, or when $w_{k}(i)=1/(mk+m)$
for each $k\in\mathbb{N}\cup\{0\}$ and each $i\in I$, with the exception
of one index $i_{k}\in I$ for which $w_{k}(i_{k})=(mk-1)/(mk+m)$
(there is no restriction at all on $i_{k}$, and yet $\sum_{k=0}^{\infty}w_{k}(i)=\infty$
for all $i\in I$; indeed, fix some $i\in I$ and let $k\geq2$ be
arbitrary; either $i\neq i_{k}$ and then $w_{k}(i)=1/(mk+m)$, or
$i=i_{k}$ and then $w_{k}(i)=(mk-1)/(mk+m)\geq1/(mk+m)$; hence,
$\sum_{k=2}^{\infty}w_{k}(i)\geq\sum_{k=2}^{\infty}(1/(mk+m))=\infty$;
thus, also $\sum_{k=0}^{\infty}w_{k}(i)=\infty$).

Another example is the one given in Example \textbf{\ref{ex:BlockIterative}}
for the control which cycles periodically between the blocks and gives
equal weights to the elements in a specific block. Many more examples
can be given. 
\end{remark}

\begin{remark} \label{rem:fair} One of the assumptions which is
stated in \cite[Algorithm 1, p. 168 and Theorem 1, p. 171]{AharoniCensor1989jour}  
is that the sequence $(w_{k})_{k=0}^{\infty}$ of weight functions
is \textit{fair}, that is, for every $i\in I$ there exist infinitely
many iteration indices $k\in\mathbb{N}\cup\{0\}$ such that $w_{k}(i)>0$.
However, this assumption is never used during the proof of the main
convergence theorem \cite[Theorem 1]{AharoniCensor1989jour}. In our
proof above it is not used as well, and hence we did not even mention
it before the proof. In other words, this assumption is unnecessary.
\end{remark}

\begin{remark}
We want to say a few words regarding possible extensions of this work and the difficulties that one is expected to face when trying to do so.
 
One possible extension is to infinite-dimensional spaces. The main difficulty here is the lack of sequential compactness, as can be seen in: Step 1 in the proof of Theorem \textbf{\ref{thm:GBIP}} (the existence of a convergent subsequence), Step 2 in the proof of Theorem \textbf{\ref{thm:GBIP}} (the existence of accumulation points), the proof of Lemma \textbf{\ref{lem:beta}} (the existence of accumulation points, Weierstrass Theorem) and the proof of Lemma \textbf{\ref{lem:eta}} (Weierstrass Theorem). 

Another possible extension is to cutters which are not necessarily continuous. The difficulty here is mainly in the proofs of Lemma \textbf{\ref{lem:eta}} (Weierstrass Theorem for the functions $g_i$ from \textbf{(\ref{eq:g_i})}) and Lemma \textbf{\ref{lem:beta}} (because of  \textbf{(\ref{eq:beta_i_def})} and \textbf{(\ref{eq:beta-i})}), but the difficulty in Lemma \textbf{\ref{lem:eta}} (and only there) can be overcome if one assumes in advance that each cutter maps bounded sets to bounded sets, since Lemma \textbf{\ref{lem:eta}} is applied (in Step 2.1 of Theorem \textbf{\ref{thm:GBIP}}) to closed balls. 

A third  possible extension is to cutters which are not necessarily defined on the whole space, such as subgradient projections of convex functions which are defined on subsets of the space. Here the whole algorithmic scheme \textbf{(\ref{eq:GBIP})} becomes undefined, but if the subset on which the cutter is defined is closed and convex, then one may overcome the problem (at least for the well-definedness of the scheme) by projecting the right-hand side of \textbf{(\ref{eq:GBIP})} on this subset.  
\end{remark}

\section*{Acknowledgements}

The authors thank the referees for their comments, which helped  improve the paper. The work of the first and second authors was supported by the ISF-NSFC joint research plan, Grant Number 2874/19.

\section*{Data availability} 

Data sharing not applicable to this article as no datasets
were generated or analysed during the current study.

\end{document}